\titleformat{\section}{\normalfont\scshape\centering}{\thesection}{1em}{}
\titleformat{\subsection}{\bfseries}{\thesubsection}{1em}{}
\newtheorem{theorem}{Theorem}[section]
\newtheorem{corollary}[theorem]{Corollary}
\newtheorem{lemma}[theorem]{Lemma}
\newtheorem{proposition}[theorem]{Proposition}
\theoremstyle{definition}
\newtheorem{remark}[theorem]{Remark}
\numberwithin{equation}{section}
\newcommand{\ord}{\textup{ord}}
\newcommand{\Gal}{\textup{Gal}}
\newcommand{\Li}{\textup{Li}}
\author{Olli J\"arviniemi}
\address{Department of Mathematics and Statistics, P.O. Box 68, 00014 Helsinki, Finland}
\email{olli.jarviniemi@helsinki.fi}
\title{Orders of algebraic numbers in finite fields}
\date{}
\begin{document}
\begin{abstract}
For an algebraic number $\alpha$ we consider the orders of the reductions of $\alpha$ in finite fields. In the case where $\alpha$ is an integer, it is known by the work on Artin's primitive root conjecture that the order is ``almost always almost maximal'' assuming the generalized Riemann hypothesis, but unconditional results remain modest. We consider higher degree variants under GRH. First, we modify an argument of Roskam to settle the case where $\alpha$ and the reduction have degree two. Second, we give a positive lower density result when $\alpha$ is of degree three and the reduction is of degree two. Third, we give higher rank results in situations where the reductions are of degree two, three, four or six. As an application we give an almost equidistribution result for linear recurrences modulo primes. Finally, we present a general result conditional to GRH and a hypothesis on smooth values of polynomials at prime arguments.
\end{abstract}

\maketitle

\section{Introduction}

Artin's famous primitive root conjecture asserts that for a given integer $a \neq -1$ not equal to a square, $a$ is a primitive root modulo infinitely many primes. While this is still unproven, we expect the density of such primes to be positive, as has been shown by Hooley \cite{hooley} under the generalized Riemann hypothesis (GRH). Furthermore, the density of primes $p$ with $\ord_p(a) \ge (p-1)/C$ should tend to $1$ as $C \to \infty$, as long as $|a| > 1$. In other words, the order of $a$ modulo $p$ should be ``almost always almost maximal''. This is again only known under GRH.

Essentially the best known unconditional result about the size of $\ord_p(a)$ is a square root bound proven by the following easy argument.

\begin{lemma}
\label{lem:global}
Let $a$ be an integer with $|a| > 1$. For all but $o(\pi(x))$ primes $p \le x$ one has $\ord_p(a) > \sqrt{p}/\log p$.
\end{lemma}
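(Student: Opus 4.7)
The plan is to use the elementary pigeonhole-style argument based on the fact that if $\ord_p(a) = d$, then $p$ divides $a^d - 1$, and $a^d - 1$ has at most $O(d \log|a|)$ prime divisors.

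First I would reduce to the primes $p > \sqrt{x}$, since there are only $O(\sqrt{x}/\log x) = o(\pi(x))$ primes below $\sqrt{x}$ and these can be thrown into the exceptional set. For $p \in (\sqrt{x}, x]$, the hypothesis $\ord_p(a) \le \sqrt{p}/\log p$ implies $\ord_p(a) \le y$ where I set $y := 2\sqrt{x}/\log x$ (using $\log p > (\log x)/2$).

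Next, every such $p$ divides $a^d - 1$ for some positive integer $d \le y$. Hence the count of bad primes is at most
\[
\sum_{d \le y} \omega(a^d - 1) \ll \sum_{d \le y} \log|a^d - 1| \ll \log|a| \sum_{d \le y} d \ll y^2 \log|a|,
\]
using the crude estimate $\omega(n) \ll \log n$. Substituting $y = 2\sqrt{x}/\log x$ gives a bound of $O\!\left(x \log|a| / (\log x)^2\right)$, which is $o(x/\log x) = o(\pi(x))$, as desired.

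The argument has no real obstacle; the only place one must be mildly careful is handling the fact that the threshold $\sqrt{p}/\log p$ depends on $p$, which I resolve by discarding the small primes $p \le \sqrt{x}$ and replacing the threshold with the uniform value $2\sqrt{x}/\log x$. The hypothesis $|a| > 1$ is used only to ensure $a^d - 1 \neq 0$ and that $\log|a^d - 1| \ll d \log|a|$.
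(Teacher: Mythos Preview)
The proposal is correct and takes essentially the same approach as the paper: both observe that a bad prime $p$ must divide some $a^d - 1$ with $d$ bounded by roughly $\sqrt{x}/\log x$, and then bound the total number of such primes by the logarithm of the product $\prod_{d} (a^d - 1)$. Your version sums $\omega(a^d-1)$ over $d$ (incurring harmless overcounting) and explicitly discards $p \le \sqrt{x}$ to make the threshold uniform, whereas the paper instead uses that $\sqrt{p}/\log p$ is increasing and bounds the product of all bad primes at once; these are cosmetic differences only.
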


\begin{proof}
If for $p \le x$ a prime we have $\ord_p(a) \le \sqrt{p}/\log p$, then $p \mid T := \prod_{n \le \sqrt{x}/\log x} (a^n - 1)$, and so the product of such primes $p$ also divides $T$. However, $0 < |T| \ll a^{x/(\log x)^2}$, and hence there are at most $O(x/(\log x)^2)$ such primes $p$.
\end{proof}

One can slightly improve the lower bound by showing that for any function $\epsilon(p) \to 0$, almost all primes $p$ are such that $p-1$ has no divisor in $[p^{1/2 - \epsilon(p)}, p^{1/2 + \epsilon(p)}$] (see \cite{erdos-ram}). However, this is the best result we know: the result of Lemma \ref{lem:global} is not known with a lower bound $p^{1/2 + c}$ for any fixed $c > 0$.

A natural variant of the problem is to consider the orders of the reductions of algebraic integers in finite fields (of not necessarily prime size): Given an algebraic integer (or in general an algebraic number) $\alpha$, how large are the orders of the reductions of $\alpha$ in finite fields of characteristic $p$, as $p$ and the reduction map vary?

Note that there are usually several different ways to reduct $\alpha$ to an extension of $\mathbb{F}_p$. In elementary terms, the possibilities are given by the roots of the minimal polynomial of $\alpha$ in extensions of $\mathbb{F}_p$. In terms of prime ideals of rings of integers of number fields, the possibilities are given by considering the prime ideals $\mathfrak{p}$ of $O_{\mathbb{Q}(\alpha)}$ lying over $(p)$ and considering the image of $\alpha$ under the map $O_{\mathbb{Q}(\alpha)} \to O_{\mathbb{Q}(\alpha)}/\mathfrak{p}$, whose domain is a finite field of size $N(\mathfrak{p})$.

One could expect that, as in the integer case, the order of the reduction is again ``almost always almost maximal'' in an appropriate sense, no matter how the reductions are chosen as $p$ ranges over the primes.

As Lemma \ref{lem:global} is essentially the best known unconditional result in the integer case, it seems that essentially the best known result in the algebraic number situation is given by the proof of Lemma \ref{lem:global} adapted to the setting of algebraic numbers. 

However, in contrast to the integer case, in the general case the state of matters is far from satisfactory even under the assumption of GRH. We do know (under GRH) that a reduction of $\alpha$ is almost always of almost maximal order if the reduction is of degree $1$, corresponding to a result on the orders of $\mathbb{F}_p$ roots of a fixed polynomial $P \in \mathbb{Z}[x]$. (This is rather similar to the integer case.) However, in addition to this the only other situation we understand is the case where both $\alpha$ and its reduction have degree $2$ (corresponding to a result on the orders of the roots of a quadratic polynomial $P$ in $\mathbb{F}_{p^2}$ for those primes $p$ for which $P$ is irreducible modulo $p$).

While there have been several works on such topics in the case where both $\alpha$ and the reduction have degree $2$ (see \cite[Section 8.7.9]{moree} for references, in particular \cite{roskamUnit} and \cite{roskamQuad}), we have not found an explicit statement of the almost maximality of the order. Hence we state and prove this result below. The crucial ideas may be found in the work of Roskam \cite{roskamQuad}.

For technical reasons, it turns out that the problem becomes increasingly difficult as one increases the degree of the reduction of $\alpha$ and/or the degree of $\alpha$ itself, and it seems that such generalizations have received less attention. An application of such a result may be found in \cite{roskamLinRec}, and considerations focusing on the algebraic aspects of calculating relevant conjectural densities (rather than bounding the error terms) may be found in \cite{kitaoka1}, \cite{kitaoka2}. 

One of our main results is a positive (lower) density result in the case where $\deg(\alpha) = 3$ and the reduction is of degree $2$. We also consider the case where $\deg(\alpha)$ is arbitrary (again when the reduction is of degree $2$), but here we have to weaken our conclusion to a result in a higher rank situation. The benefit of having a higher rank is that the ``global argument'' of Lemma \ref{lem:global} may then be adapted to give stronger bounds (see \cite{matthews}, also \cite{agrawal-pollack}).

We furthermore consider more general situations where the degree of the reduction is at least three. Below we briefly describe what makes this problem harder than the original primitive root conjecture. See Section \ref{sec:overview} for more detailed discussion of the difficulties and our methods.

For the sake of discussion, consider the case where $\deg(\alpha) = 3$ and the reduction $\varphi(\alpha)$ has degree $3$ over $\mathbb{F}_p$, and try to determine the number of primes $p \le x$ for which $\varphi(\alpha)$ has maximal order $p^3 - 1$. Proceeding as Hooley \cite{hooley}, we could consider, for each prime $q \mid p^3 - 1 = (p-1)(p^2 + p + 1)$, whether $\varphi(\alpha)$ is a $q$th power in $\mathbb{F}_{p^3}$ or not. However, in the integer case considered by Hooley one has $q \mid p-1$, and so $q \le x$, while in our problem $q$ can be as large as $x^2$. As a result, when Hooley uses the argument of Lemma \ref{lem:global}, he may reduce to the region $q \le \sqrt{x}\log x$, which is then handled by (the GRH-conditional version of) the Chebotarev density theorem and the Brun-Titchmarsh inequality. By proceeding similarly, we are only able to reduce to $q$ at most $x^{3/2}\log x$, which is far from enough to be able to apply the Chebotarev density theorem or the Brun-Titchmarsh inequality.

Since the global argument of Lemma \ref{lem:global} is essentially the only method we know that is applicable for handling the contribution of large $q$ effectively, one may expect the problem to be rather difficult.

As the knowledge about the power residue conditions is very limited (boiling down essentially to the Chebotarev density theorem and the global argument), our line of attack is by being more careful with the congruence conditions. In practice this means restricting the (prime) divisors of $p^3 - 1 = (p-1)(p^2 + p + 1)$.

Unfortunately, the knowledge on the large prime factors of polynomial values (especially at prime arguments) is rather limited. Already in the simplest case of shifted primes we only know that the largest prime factor of $p-1$ is infinitely often as large as $p^{0.677}$ and as small as $p^{0.2961}$ \cite{baker-harman}, while we would expect corresponding bounds $\ge p^{1-\epsilon}$ and $\le p^{\epsilon}$ for any $\epsilon > 0$. Hence, such an approach does not lead unconditionally very far, but for completeness we do formulate a general result conditional to what we expect to be true below (Theorem \ref{thm:general}).

On the bright side, in the cases where the reduction $\varphi(\alpha)$ has degree equal to $k \in \{3, 4, 6\}$ the relevant polynomial $p^k - 1$ factorizes into linear polynomials and quadratics, and we are able to control the prime factors of $p^k - 1$ which lie in the range $[p^{1 - \epsilon}, p^{1 + \epsilon}]$. This is Proposition \ref{prop:mediumPrimeFactor} below; the author is very grateful to Jori Merikoski for providing a proof sketch. This result is enough to improve the bound given by the global argument of Lemma \ref{lem:global} by a small power of $p$ in a certain higher rank case. 

While this improvement is quite small, it leads to an interesting application. Niederreiter \cite[Theorem 4.1]{niederreiter} has shown that if the period of a linear recurrence in a finite field is large enough, then the linear recurrence is approximately equidistributed. More precisely, the result applies if the period of such a sequence of modulo $p$ is substantially larger than $p^{d/2 + 1}$, with $d$ being the depth of the recurrence. Note that the period is controlled by the least common multiple of the orders of the roots of the characteristic polynomial, connecting the result to our problem. Our lower bound on $\ord_p(\varphi(\alpha))$ is good enough to apply the result in the case $\deg(\alpha) = \deg(\varphi(\alpha)) \in \{4, 6\}$.

We remark that questions on how often a linear recurrence has a zero modulo $p$ (a conclusion much weaker than approximate equidistribution) have previously received some attention, especially in the case of linear recurrences of order two (see \cite[Section 8.4]{moree} for references). The author has in a recent preprint established a GRH-conditional result applicable for generic recurrences of any order \cite{jarviniemi}.  Previously such a result had been established under a hypothesis on almost maximality of the order almost always in the case $\deg(\alpha) = \deg(\varphi(\alpha))$ \cite{roskamLinRec}.

Below we present some notation and then state our results. An overview of the methods is given, and after giving the necessary preliminaries the detailed proofs follow. We freely assume GRH in all of our discussions.

\section{Notation and conventions}
\label{sec:notation}

The letter $p$ will always denote a prime and $\alpha$ denotes a non-zero algebraic number, whose degree over $\mathbb{Q}$ is denoted by $\deg(\alpha)$. The norm of $\alpha$ over $\mathbb{Q}$ is denoted by $N(\alpha)$.

For a given prime $p$, we let $\varphi = \varphi_p$ denote a homomorphism from a ring of algebraic numbers to $\mathbb{F}_{p^{\infty}}$. We usually omit the subscript (no confusion should arise). We will always assume that $\varphi$ is defined on all relevant algebraic numbers. For example, if a statement concerns the orders of $\alpha_1, \ldots , \alpha_n$, and in the proof we consider a reduction map $\varphi$, we assume $\varphi$ is defined on all of $\alpha_i$ and their conjugates. The specifics of which ring one defines $\varphi$ on are not important. In this example case one suitable choice is the set of numbers of the form $\beta/m$, where $\beta$ is an algebraic integer in the Galois closure of $\mathbb{Q}(\alpha_1, \ldots , \alpha_n)$ and $m$ is an integer not divisible by $p$.

We assume that $p$ is large enough so that such reductions indeed exist. (Note that if, for example, $\alpha = 1/2$, no such reduction exists for $p = 2$.) We also assume that $p$ is unramified, i.e. there are $\deg(\alpha_i)$ distinct reductions of $\alpha_i$ to extensions of $\mathbb{F}_p$ for all relevant (finitely many) $\alpha_i$. Furthermore, we assume that $p$ is large enough so that $\varphi_p(\alpha_i)$ is non-zero all relevant $\alpha_i$.

We denote the degree of $x \in \mathbb{F}_{p^{\infty}}$ over $\mathbb{F}_p$ by $\deg(x)$. The multiplicative order of an element $0 \neq x \in \mathbb{F}_{p^{\infty}}$ is denoted by $\ord_p(x)$. Note that $\ord_p(x)$ is trivially bounded by $p^{\deg(x)} - 1$.

\section{Results}
\label{sec:results}

For completeness, we first state a result well-known to experts, which handles the case $\deg(\varphi(\alpha)) = 1$ under GRH. (See e.g. \cite[Theorem 4]{erdos-ram} for a proof of $d(h) \to 1$ in the integer case -- the general case can be proven similarly. The existence follows by the method of Hooley \cite{hooley}.)

\begin{theorem}[The case $\deg(\varphi(\alpha)) = 1$]
\label{thm:degN1}
Assume GRH. Let $\alpha$ be a non-zero algebraic which is not a root of unity. For $h > 0$, let $d(h)$ denote the density of primes $p$ such that any degree one reduction $\varphi_p(\alpha)$ of $\alpha$ to $\mathbb{F}_{p^{\infty}}$ satisfies $\ord_p(\varphi_p(\alpha)) \ge (p-1)/h$. Then $d(h)$ exists and $d(h) \to 1$ as $h \to \infty$.
\end{theorem}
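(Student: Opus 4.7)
The plan is to follow Hooley's approach to Artin's primitive root conjecture, adapted to degree-one reductions of algebraic numbers. Write $K = \mathbb{Q}(\alpha)$; a degree-one reduction $\varphi_p(\alpha) \in \mathbb{F}_p^*$ arises from a prime $\mathfrak{p}$ of $O_K$ of residue degree one, and the condition $\ord_p(\varphi_p(\alpha)) \ge (p-1)/h$ is equivalent to the subgroup index $i_p := [\mathbb{F}_p^* : \langle \varphi_p(\alpha)\rangle]$ satisfying $i_p \le h$. Hence it suffices to show that the density of primes $p \le x$ for which some degree-one reduction has $i_p > h$ is $O(1/h)$, which yields $d(h) = 1 - O(1/h) \to 1$, and separately to establish the existence of $d(h)$ as a true density.

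For each $k \ge 1$, let $L_k = K(\zeta_k, \alpha^{1/k})$. By Kummer theory, $p$ admits a degree-one reduction $\varphi_p(\alpha)$ with $k \mid i_p$ if and only if $L_k$ has a prime of residue degree one above $p$ (equivalently, $k \mid p-1$ and $\varphi_p(\alpha)$ is a $k$-th power modulo the corresponding $\mathfrak{p}$). Under GRH, the effective Chebotarev density theorem applied to the Galois closure of $L_k$ gives
\[
\#\{p \le x : k \mid i_p \text{ for some deg-one reduction}\} = \delta(k)\pi(x) + O\bigl(\sqrt{x}\log(k^k x D_{L_k})\bigr),
\]
with $\delta(k) \ll 1/(k\varphi(k))$, since $[L_k:\mathbb{Q}]$ grows like $k\varphi(k)$ up to factors bounded in terms of $\alpha$. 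In particular $\sum_{k > h}\delta(k) = O(1/h)$.

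Union-bounding over $k$, the bad set is contained in $\bigcup_{h < k \le p-1} \{p \le x : k \mid i_p \text{ for some deg-one reduction}\}$. Split at $Y := \sqrt{x}/\log^3 x$: for $h < k \le Y$, the Chebotarev estimates sum to a main term of size $O(\pi(x)/h)$ plus total error $O(Y\sqrt{x}\log^2 x) = o(\pi(x))$. For $k > Y$, the condition $k \mid i_p$ forces $\ord_p(\varphi_p(\alpha)) \le p/Y$, and a global argument in the spirit of Lemma \ref{lem:global}, now applied with $a^n - 1$ replaced by $N_{K/\mathbb{Q}}(\alpha^n - 1)$ (whose logarithmic size is $O((p/Y)^2 \log|\alpha|)$), shows that the number of such primes is $o(\pi(x))$. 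Adding the two ranges yields the $O(1/h)$ bound on the density of bad primes.

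Existence of $d(h)$ as a true density follows via Möbius inversion: $\delta^*(k) := \sum_{m \ge 1}\mu(m)\delta(km)$ is the density of primes with $i_p = k$ for some degree-one reduction, and $d(h)$ can be written as a finite sum of such $\delta^*(k)$ plus the (Chebotarev) density of primes admitting no degree-one reduction at all. The main obstacle, as is typical here, is the middle range of $k$ around $\sqrt{x}$, where the Chebotarev error threatens to dominate: GRH is essential precisely for extending the effective Chebotarev window up to $k \asymp \sqrt{x}$, after which the global argument takes over.
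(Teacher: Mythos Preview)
Your approach is the right one and matches the paper's sketch (which cites Hooley and Erd\H{o}s--Murty and points to the four-step decomposition in Section~4). However, your two-range split has a genuine gap: the Chebotarev range and the global range do not meet at any single cutoff $Y$. With $Y = \sqrt{x}/\log^3 x$, for $k > Y$ one has $\ord_p(\varphi_p(\alpha)) < x/Y = \sqrt{x}(\log x)^3$, and the global argument bounds the number of such $p$ by roughly $\log \prod_{n \le \sqrt{x}(\log x)^3} |N_{K/\mathbb{Q}}(\alpha^n - 1)| \asymp \bigl(\sqrt{x}(\log x)^3\bigr)^2 = x(\log x)^6$, which is far larger than $\pi(x)$, not $o(\pi(x))$ as you claim. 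Conversely, for the global step to yield $o(\pi(x))$ one needs $Y \gg \sqrt{x}\log x$, and then the accumulated Chebotarev error $\sum_{k \le Y} O(\sqrt{x}\log x)$ exceeds $x(\log x)^2$. (Your own bookkeeping is also slightly off: $Y\sqrt{x}\log^2 x$ with your $Y$ equals $x/\log x \sim \pi(x)$, not $o(\pi(x))$.)

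The paper fills this gap with Hooley's step (iii): for $q$ in the short window $[\sqrt{x}/(\log x)^{100}, \sqrt{x}(\log x)^{100}]$ one drops the power-residue condition and bounds $|\{p \le x : p \equiv 1 \pmod q\}|$ by Brun--Titchmarsh, which sums to $o(\pi(x))$ because $\sum_{q \text{ prime in window}} 1/q = o(1)$. For this to work one must index by \emph{primes} $q$ (or prime powers $\ell(q)$, as in the paper's later proofs), not by all integers $k$ as you do: there are too many integers in the window for a Brun--Titchmarsh sum over them to be $o(\pi(x))$. Reformulating via $h = H!$ and the conditions $C_q$ (so that $i_p \nmid h$ forces $\ell(q) \mid i_p$ for some prime $q$) and then inserting the Brun--Titchmarsh middle step restores the argument.
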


We then state the result for $\deg(\alpha) = \deg(\varphi(\alpha)) = 2$. The result is essentially due to Roskam \cite{roskamQuad}.

\begin{theorem}[The case $\deg(\alpha) = \deg(\varphi(\alpha)) = 2$]
\label{thm:deg22}
Assume GRH. Let $\alpha$ be an algebraic of degree $2$. Assume that the quotient of $\alpha$ and its conjugate is not a root of unity. Let $f(p) = p^2 - 1$ if $|N(\alpha)| \neq 1$ and $f(p) = 2(p+1)$ if $|N(\alpha)| = 1$. For $h > 0$, let $d(h)$ denote the density of primes $p$ such that for any reduction $\varphi_p(\alpha)$ of degree $2$ to $\mathbb{F}_{p^{\infty}}$ one has $\ord_p(\varphi_p(\alpha)) \ge f(p)/h$. Then $d(h)$ exists and $d(h) \to 1$ as $h \to \infty$.
\end{theorem}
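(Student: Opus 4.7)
I follow Roskam's strategy \cite{roskamQuad}, combining a Hooley-type decomposition with effective Chebotarev under GRH and the global argument of Lemma \ref{lem:global}. The idea is to first reduce the case $|N(\alpha)| \ne 1$ to the norm-$(\pm 1)$ case and then prove the latter directly.

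For the reduction, I would use the identity $\alpha^2 = N(\alpha) \cdot (\alpha/\alpha')$. For $p$ inert in $K = \mathbb{Q}(\alpha)$, the reduction $\varphi(N(\alpha))$ lies in $\mathbb{F}_p^*$ (cyclic of order $p-1$) while $\varphi(\alpha/\alpha')$ lies in the norm-one subgroup $\mu_{p+1}$ of $\mathbb{F}_{p^2}^*$; these subgroups intersect only in $\{\pm 1\}$, so $\ord(\varphi(\alpha^2))$ equals $\lcm(\ord(\varphi(N(\alpha))), \ord(\varphi(\alpha/\alpha')))$ up to a factor of $2$. Since $|N(\alpha)| \ne 1$ ensures that $N(\alpha) \in \mathbb{Q}^*$ is not a root of unity, Theorem \ref{thm:degN1} gives $\ord(\varphi(N(\alpha))) \ge (p-1)/h_1$ on a set of density $\to 1$ as $h_1 \to \infty$. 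The hypothesis that $\alpha/\alpha'$ is not a root of unity, combined with the norm-one case below, yields the analogous bound $\ord(\varphi(\alpha/\alpha')) \ge (p+1)/h_2$. Choosing $h_1, h_2$ as appropriately growing functions of $h$ then produces $\ord(\varphi(\alpha)) \ge (p^2-1)/h$ on a set of density $\to 1$.

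The heart of the proof is the norm-$(\pm 1)$ case, where $\varphi(\alpha)^{p+1} = \varphi(\alpha\alpha') = \pm 1$ gives $\ord(\varphi(\alpha)) \mid 2(p+1) = f(p)$. Following Hooley, I call $p$ bad (at level $h$) if $\ord(\varphi(\alpha)) < f(p)/h$ and decompose the bad set via the prime divisors of the index $I(p) := f(p)/\ord(\varphi(\alpha))$. For each odd prime $q \mid I(p)$, $\varphi(\alpha)$ is a $q$-th power in $\mathbb{F}_{p^2}^*$, which by Kummer theory amounts to $\textup{Frob}_p$ in the extension $L_q := K(\zeta_q, \alpha^{1/q})$ lying in a prescribed conjugacy class; crucially, $L_q/\mathbb{Q}$ is Galois because $(\alpha')^{1/q} \in L_q$ by the norm-one condition, and the per-$q$ density is of order $1/q^2$. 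I would split the $q$-range into three pieces. For large $q > \sqrt{x}\log x$, the bound $\ord(\varphi(\alpha)) \le 2(p+1)/q < 2\sqrt{x}/\log x$ forces $p$ to divide $N_{K/\mathbb{Q}}(\alpha^n - 1)$ for some $n \le 2\sqrt{x}/\log x$, and the argument of Lemma \ref{lem:global} produces $O(x/\log^2 x) = o(\pi(x))$ bad primes. For small $q$ beyond $h$, effective GRH Chebotarev on $L_q$ delivers a per-$q$ density that sums over $q > h$ (together with compound $h$-smooth indices) to $O(1/h)$. The medium range is handled by Brun--Titchmarsh, bounding primes in the arithmetic progression $p \equiv -1 \pmod q$, exactly as in Hooley's original Artin argument.

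Assembling the three ranges gives both the existence of $d(h)$ — through a convergent Euler-product-type expansion of the main term from the small-$q$ range — and the convergence $d(h) \to 1$ as $h \to \infty$. The main technical obstacle is bridging the Chebotarev regime, which effective GRH only controls up to roughly $q \ll x^{1/4}$, and the global regime $q \gtrsim \sqrt{x}\log x$; Brun--Titchmarsh closes this gap, just as in Hooley's proof, since $f(p) = O(p)$ is of the same order of magnitude as the integer-case modulus $p-1$, so the middle-range argument transfers essentially unchanged.
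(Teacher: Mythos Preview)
Your reduction of the general case to the norm-one case via $\alpha^2 = N(\alpha)\cdot(\alpha/\alpha')$, together with Theorem~\ref{thm:degN1} for the rational factor $N(\alpha)$, is correct and is a somewhat different route from the paper's. The paper does not first reduce to norm one; instead it works in $K_q^2 = \mathbb{Q}(\zeta_q,\alpha_1^{1/q},\alpha_2^{1/q})$ (degree $\asymp q^3$), identifies a normal subgroup of size $\le 2q$ containing the relevant Frobenius class, and then applies Chebotarev to the splitting condition in the fixed field (degree $\gg q^2$). Your reduction bypasses Roskam's fixed-field trick entirely, since after passing to $\beta = \alpha/\alpha'$ the field $L_q = K(\zeta_q,\beta^{1/q})$ is already Galois over $\mathbb{Q}$ of degree $\asymp q^2$.

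There is, however, a genuine gap in your handling of the middle range. Brun--Titchmarsh over $[x^{1/4},\sqrt{x}\log x]$ does \emph{not} behave ``just as in Hooley's proof'': summing $|\{p\le x: p\equiv -1\pmod q\}|$ over this interval gives $\asymp \pi(x)\sum_q 1/q \asymp \pi(x)\log 2$, a positive proportion, not $o(\pi(x))$. In Hooley's argument the Brun--Titchmarsh window is only $[\sqrt{x}/(\log x)^C,\sqrt{x}(\log x)^C]$, logarithmically short, and that is essential. The fix is a computation you have set up but not carried out: in the norm-one case the conjugacy class in $\Gal(L_q/\mathbb{Q})$ corresponding to your condition has size $O(1)$, not $\asymp q$. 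Indeed, writing $(\beta')^{1/q} = \zeta_q^e\,\beta^{-1/q}$ for a fixed $e$ (using $\beta\beta'=1$), the two constraints $\sigma(\zeta_q)=\zeta_q^{-1}$ and $\sigma^2=\id$ force $\sigma(\beta^{1/q}) = \zeta_q^a(\beta')^{1/q}$ with $a\equiv -e\pmod q$, so $\sigma$ is determined. Hence the GRH error is $O(\sqrt{x}\log x)$ per $q$, the sum over $q\le \sqrt{x}/(\log x)^{100}$ is $o(\pi(x))$, and Brun--Titchmarsh only has to cover $[\sqrt{x}/(\log x)^{100},\sqrt{x}(\log x)^{100}]$. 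The paper records exactly this observation (that $|C|=O(1)$ when $|N(\alpha)|=1$) in the remark immediately following its proof.
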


(It is not hard to see that $\ord_p(\varphi(\alpha))$ divides $f(p)$. Also, while there are two different reduction maps $\varphi_{p, 1}, \varphi_{p, 2}$ for any (unramified) $p$, they are essentially equivalent, as $\varphi_{p, 1}(\alpha)$ and $\varphi_{p, 2}(\alpha)$ are conjugates over $\mathbb{F}_p$, and in particular their orders are equal. Similar remarks apply for the results below.)

For the case $\deg(\alpha) = 3, \deg(\varphi(\alpha)) = 2$ we have the following positive lower density result.

\begin{theorem}[The case $\deg(\alpha) = 3$, $\deg(\varphi(\alpha)) = 2$]
\label{thm:deg32}
Assume GRH. Let $\alpha$ be an algebraic number of degree $3$. Let $S$ denote the set of primes $p$ such that there is a reduction map $\varphi_p$ with $\deg(\varphi_p(\alpha)) = 2$. Assume $S$ is infinite (and thus of positive density). For $h > 0$, let $d(h)$ denote the lower density of $p \in S$ (with respect to $S$) such that $\ord_p(\varphi_p(\alpha)) \ge (p^2 - 1)/h$ for all reductions $\varphi_p(\alpha)$ of degree $2$. Then
$$\lim_{h \to \infty} d(h) \ge 1 - \log(\sqrt{6}) \ge 0.1041.$$
Furthermore, if $|N(\alpha)| = 1$, one has the stronger bound
$$\lim_{h \to \infty} d(h) \ge 1 - \log(\sqrt{3}) \ge 0.4506.$$
In particular, for some $h$ we have $d(h) > 0$.
\end{theorem}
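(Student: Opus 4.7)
My plan is to split $\ord_p(\varphi(\alpha))$ multiplicatively into a piece living in $\mathbb{F}_p^*$ and a piece living in the norm-one subgroup of $\mathbb{F}_{p^2}^*$, dispatch the first via Theorem \ref{thm:degN1}, and run a Hooley-style argument on the second; the medium range of this split is what will produce the stated constants.

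\textbf{Setup and decomposition.} The infinitude of $S$ forces $\Gal(L/\mathbb{Q}) = S_3$, where $L$ is the Galois closure of $\mathbb{Q}(\alpha)$, since a cyclic Galois group produces only factorization types $(1,1,1)$ and $(3)$ of the minimal polynomial of $\alpha$ over $\mathbb{F}_p$. I restrict to those $p \in S$ whose Frobenius acts on $\alpha_1,\alpha_2,\alpha_3$ as a transposition, say $(2\,3)$, so that $\varphi(\alpha_1) \in \mathbb{F}_p$ and $\varphi(\alpha_2),\varphi(\alpha_3)$ are $\mathbb{F}_p$-conjugate. Taking $\alpha = \alpha_2$, set $\beta := \alpha_2\alpha_3 \in \mathbb{Q}(\alpha_1)$ and $\gamma := \alpha_3/\alpha_2 \in L$. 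Then $\varphi(\alpha)^{p+1} = \varphi(\beta) \in \mathbb{F}_p^*$, while $\varphi(\alpha)^{p-1} = \varphi(\gamma)$ lies in $G_p$, the cyclic subgroup of $\mathbb{F}_{p^2}^*$ of order $p+1$. Using $\gcd(p-1,p+1) \le 2$, elementary order arithmetic gives $\ord_p(\varphi(\alpha)) \ge \tfrac12 \ord_p(\varphi(\beta))\ord_p(\varphi(\gamma))$, so the bad event $\ord_p(\varphi(\alpha)) < (p^2-1)/h$ implies either $\ord_p(\varphi(\beta)) < (p-1)/\sqrt{h/2}$ or $\ord_p(\varphi(\gamma)) < (p+1)/\sqrt{h/2}$.

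\textbf{Handling $\beta$ and $\gamma$.} Generically $\beta$ has degree $3$ over $\mathbb{Q}$ and $\varphi(\beta)$ has degree $1$ over $\mathbb{F}_p$, so Theorem \ref{thm:degN1} yields an $o_H(1)$ density bound for primes with $\ord_p(\varphi(\beta)) < (p-1)/H$ and contributes nothing to $\lim_h d(h)$. For $\gamma$, the condition $\ord_p(\varphi(\gamma)) < (p+1)/H$ forces $\varphi(\gamma) \in G_p^q$ for some prime $q \mid p+1$ with $q$ essentially at least $H$, and I split the primes $q$ into three regimes. For small $q \le \xi_1$, GRH-conditional Chebotarev in the Kummer tower $L(\zeta_q,\gamma^{1/q})/\mathbb{Q}$ gives per-$q$ density $O(1/q^2)$, summing to $o_H(1)$. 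For large $q > \xi_2$ the global argument of Lemma \ref{lem:global} applies: $\ord_p(\varphi(\gamma)) < Y$ forces $\mathfrak{P} \mid \gamma^n-1$ in $O_L$ for some $n \le Y$ and some $\mathfrak{P}$ with $N(\mathfrak{P}) = p^2$, so $\sum_p 2\log p \le \log \prod_{n \le Y}|N_{L/\mathbb{Q}}(\gamma^n-1)| = O(Y^2)$, which for $Y = \sqrt x/\log x$ gives $o(\pi(x))$. The medium range $q \in (\xi_1,\xi_2]$ is where the constants emerge: no GRH-Chebotarev is available (its error swamps the main term), and one falls back on Brun--Titchmarsh applied to the congruence $p \equiv -1 \pmod q$, whose Mertens-type summation over primes $q$ produces a quantity proportional to $\log(\log\xi_2/\log\xi_1)$; optimization of $\xi_1,\xi_2$ together with the $\sqrt{h/2}$ splitting from the decomposition step yields the upper bound $\log\sqrt 6$ on the upper density of bad primes.

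\textbf{The $|N(\alpha)|=1$ refinement and main obstacle.} When $N(\alpha) = \pm 1$, the relation $\alpha_1\alpha_2\alpha_3 = \pm 1$ makes the conjugates multiplicatively rank $2$, and a higher-rank global argument in the style of Matthews strengthens the large-$q$ bound on $\ord_p(\varphi(\gamma))$, effectively shrinking the medium range so that the relevant Mertens integral reduces from $\log 6$ to $\log 3$, yielding $\log\sqrt 3$ in place of $\log\sqrt 6$. The fundamental obstacle is the medium range itself: the only unconditional handle on $q$-th power residues of $\gamma$ in the Chebotarev-to-global gap is Brun--Titchmarsh on the congruence $q \mid p+1$ alone, and its Mertens sum over that gap is an irreducible positive constant. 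Upgrading the lower density to density $1$ would require a new unconditional or stronger GRH-conditional power-residue estimate which is not presently available, and this is precisely why only a positive density rather than density $1$ is obtained.
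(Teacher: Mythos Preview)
Your overall decomposition is right and matches the paper: split $\ord_p(\varphi(\alpha))$ into a $p-1$ piece (handled via Theorem~\ref{thm:degN1}) and a $p+1$ piece, then run a three-range Hooley argument on the latter. But the medium range---which is where the constants actually live---has a genuine gap. Brun--Titchmarsh on $p \equiv -1 \pmod q$ loses a factor of $2$, and since you must count relative to $S$ (which has density $1/2$) you lose another factor of $2$; over the unavoidable interval between where Chebotarev stops and where the global argument starts this gives a density loss well exceeding $1$, so the conclusion is vacuous. The paper instead uses Bombieri--Vinogradov, and crucially observes (via quadratic reciprocity on the discriminant of $\alpha$) that membership in $S$ is itself a congruence condition modulo $4|D|$, so that Bombieri--Vinogradov applies directly to $\pi_S(x;q,-1)$ and the $d(S)$ factor is recovered. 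The specific constant $\log\sqrt{6}$ then comes from a further device you do not have: the paper splits on $q \pmod 3$, and for $q \equiv 2\pmod 3$ invokes known cases of Artin's holomorphy conjecture (the Galois group of $K_q$ is close enough to nilpotent, Lemma~\ref{lem:q=5mod6}) to push the Chebotarev range from $x^{1/6}$ to $x^{1/4}$. The Bombieri--Vinogradov tally is then $\tfrac12\log 3 + \tfrac12\log 2 = \log\sqrt{6}$. Your ``optimization of $\xi_1,\xi_2$'' cannot manufacture this number from Brun--Titchmarsh alone.

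Your $|N(\alpha)|=1$ mechanism is also not the right one. A Matthews-style higher-rank global argument does not help here: you are working with the single element $\gamma$, and the multiplicative rank of the ratios $\alpha_i/\alpha_j$ is $2$ regardless of whether $|N(\alpha)|=1$, so the global-argument threshold stays at $x^{1/2}$ in both cases. The paper's improvement is instead on the Chebotarev side: the relation $\alpha_1\alpha_2\alpha_3 = \pm 1$ shrinks the relevant conjugacy class in $K_q$ from size $\asymp q^2$ to $\asymp q$ (Lemma~\ref{lem:sizeC}), which pushes the Chebotarev range out and shrinks the Bombieri--Vinogradov window, turning $\log\sqrt{6}$ into $\log\sqrt{3}$. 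Finally, note that $L(\zeta_q,\gamma^{1/q})$ is not Galois over $\mathbb{Q}$; one must pass to its Galois closure, and the whole difficulty (flagged explicitly in Section~\ref{sec:overview:32}) is that the Artin-symbol condition is not a splitting condition there, so the GRH-Chebotarev error term $O(\sqrt{x}\,|C|\log x)$ with $|C|$ a genuine power of $q$ is what limits $\xi_1$. You assert the $O(1/q^2)$ main term but do not address this error, which is the heart of the matter.
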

Conjecturally the limit in question should be equal to one, but we are unable to prove this.

In the case of arbitrary $\deg(\alpha)$ (still with $\deg(\varphi(\alpha)) = 2$) we give two results in a higher rank situation. We first show that the order of the reductions of one of $\alpha_i$ is large. We then obtain a result with a smaller rank, at the cost of proving only that the order of some ``small'' product of $\alpha_i$ is large.

\begin{theorem}[The case $\deg(\varphi(\alpha)) = 2$, version one]
\label{thm:N2v1}
Assume GRH. Let $k \ge 3$ be a positive integer. Then, for $n = (2k-2)(2k-1) + 1$, the following holds:

Let $\alpha_1, \ldots , \alpha_n$ be algebraic numbers with $3 \le \deg(\alpha_i) \le k$ such that the numbers $\alpha_1, \ldots, \alpha_n$, together with their conjugates, are multiplicatively independent. Let $S$ denote the set of primes $p$ such that there is a reduction map $\varphi_p$ satisfying $\deg(\varphi_p(\alpha_i)) = 2$ for all $i$. Assume $S$ is infinite (and thus of positive density). For $h > 0$, let $d(h)$ denote the lower density of $p \in S$ such that for such reduction map $\varphi_p$ we have, for some $i$, $\ord_p(\varphi_p(\alpha_i)) \ge (p^2 - 1)/h$. Then $d(h) \to 1$ as $h \to \infty$.
\end{theorem}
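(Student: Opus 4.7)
The plan is to combine the Hooley-style approach used in the proofs of Theorem~\ref{thm:degN1}--Theorem~\ref{thm:deg32} with a Matthews-style rank trick \cite{matthews}, exploiting the multiplicative independence hypothesis to produce a system of auxiliary numbers on which the rank trick operates effectively. Call $p \in S$ an \emph{$h$-bad} prime if the conclusion fails, i.e., for every $i$, $\ord_p \varphi_p(\alpha_i) < (p^2 - 1)/h$; equivalently, for each $i$ there is a prime $q_i = q_i(p) > h$ with $q_i \mid p^2 - 1 = (p-1)(p+1)$ and $\varphi_p(\alpha_i)^{(p^2-1)/q_i} = 1$. I will bound the density of $h$-bad primes by splitting the witnesses $q_i$ into ranges small ($q_i \le y$), medium ($y < q_i \le z$) and large ($q_i > z$), for $y = y(h) \to \infty$ and a threshold $z$ between $\sqrt{x}/\log^2 x$ and $\sqrt{x}\log x$, chosen as in Hooley's original argument.

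If some $q_i$ is small, the GRH-conditional Chebotarev density theorem applied to $\mathbb{Q}(\alpha_i^{1/q}, \zeta_q)$ (with the Frobenius condition $\deg \varphi(\alpha_i) = 2$) gives density $O(1/q^2)$ per prime $q$; summing over $q > h$ contributes $O(1/h)$, vanishing as $h \to \infty$. If some $q_i$ is medium, I combine effective Chebotarev in the lower part with Brun-Titchmarsh applied to $p \equiv \pm 1 \Mod{q}$ in the upper part, mirroring Hooley's treatment.

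The large range is the crux. Since $q_i$ is odd and $q_i \mid (p-1)(p+1)$, it divides exactly one of $p \pm 1$; denote the corresponding sign by $\epsilon_i \in \{\pm 1\}$, so that $q_i \mid p - \epsilon_i$. From the defining relation one deduces that $\varphi_p(\alpha_i)^{p + \epsilon_i}$ has order at most $(p - \epsilon_i)/q_i \le 2p/z$ in the cyclic subgroup of $\mathbb{F}_{p^2}^*$ of order $p - \epsilon_i$: namely $\mathbb{F}_p^*$ if $\epsilon_i = +1$, or the norm-one subgroup $\ker N_{\mathbb{F}_{p^2}/\mathbb{F}_p}$ if $\epsilon_i = -1$. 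Globally, $\varphi_p(\alpha_i)^{p + \epsilon_i}$ is the reduction of $\beta_i := \alpha_i \cdot \sigma_i(\alpha_i)^{\epsilon_i}$, where $\sigma_i \in \Gal(L/\mathbb{Q})$ is the Frobenius lift at $\mathfrak{p}$ acting on $\alpha_i$'s orbit. The combinatorial data $(\sigma_i, \epsilon_i)$ encodes a ``type'' ranging (per index) over at most $(2k-2)(2k-1)$ values, yielding at most $((2k-2)(2k-1))^n$ type vectors overall; for each fixed type vector, the hypothesis that $\alpha_i$ and their conjugates are multiplicatively independent guarantees that the associated $\beta_i$ are globally multiplicatively independent.

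For each type vector I then apply the rank trick in the cyclic-group setting: by Minkowski's theorem, for each bad $p$ there is a nonzero $(c_i) \in \mathbb{Z}^n$ with $\max|c_i| \ll (p/z)^{1/n}$ and $\prod \varphi_p(\beta_i)^{c_i} = 1$; global independence forces $\prod \beta_i^{c_i} - 1 \neq 0$, so $p$ divides this nonzero algebraic integer, whose norm is polynomially bounded in $\max|c_i|$. Summing over all type vectors and all short relation vectors yields the desired $o(\pi(x))$ bound for the large range. The main obstacle is the balancing: the combinatorial pigeonhole cost $((2k-2)(2k-1))^n$ must be absorbed by the rank-trick savings $(p/z)^{(n+1)/n}$, and the specific value $n = (2k-2)(2k-1) + 1$ is calibrated precisely so that, combined with multiplicative independence, this accounting closes.
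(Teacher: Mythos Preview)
Your proposal has two genuine gaps that prevent the argument from closing.

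First, the medium range. You assume GRH-conditional Chebotarev handles $q$ up to roughly $\sqrt{x}$, as in Hooley's integer case. But the Frobenius condition you need --- that $\deg\varphi_p(\alpha_i)=2$ and $\varphi_p(\alpha_i)$ is a $q$th power --- is \emph{not} a splitting condition. In the Galois closure $\mathbb{Q}(\zeta_q,\alpha_{i,1}^{1/q},\ldots,\alpha_{i,\deg\alpha_i}^{1/q})$ the relevant conjugacy class has size $\asymp q^{\deg(\alpha_i)-1}$, hence as large as $q^{k-1}$. The GRH error term $O(|C|\sqrt{x}\log x)$ therefore confines Chebotarev to $q\ll x^{1/(2k)}$, and Brun--Titchmarsh over $[x^{1/(2k)},\sqrt{x}]$ would then lose a fixed density $\approx\log(k-1)$, destroying the conclusion $d(h)\to 1$. (This obstruction is exactly why Theorem~\ref{thm:deg32} only gives a positive lower density.)

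Second, your Minkowski step is incorrect. Knowing that each $\varphi_p(\beta_i)$ has order $\le 2p/z$ does \emph{not} yield a relation with $\max|c_i|\ll(p/z)^{1/n}$: the $\beta_i$ attached to different witness primes $q_i$ lie in \emph{different} small cyclic subgroups of $\mathbb{F}_{p^2}^*$, and the subgroup they jointly generate need not be small. Minkowski applied to the kernel lattice gives only $\max|c_i|\ll\bigl(\prod_i\ord(\beta_i)\bigr)^{1/n}\ll p/z$, which is far too weak. Your per-index type count of $(2k-2)(2k-1)$ is also unexplained; the natural count for $(\sigma_i,\epsilon_i)$ is $2(k-1)$.

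The paper's argument has a different architecture that sidesteps both issues. It places the threshold at $x^{1/(2k)}$ (where Chebotarev does work) and handles the large range \emph{one $q$ at a time}: for each fixed prime $q>x^{1/(2k)}$ dividing $p+1$, the elements $\varphi_p(\alpha_i/\sigma(\alpha_i))$ with $\varphi_p(\alpha_i)$ a $q$th power all lie in the \emph{same} cyclic subgroup of order $(p+1)/q$, so the $(2k-1)$-variable global argument (Lemma~\ref{lem:globalHigh}) shows that, outside $o(\pi(x))$ exceptions, at most $2k-2$ indices $i$ can satisfy $C_{q,i}^2$ for any given such $q$. Since at most $2k-1$ primes $q>x^{1/(2k)}$ divide $p+1$, at most $(2k-2)(2k-1)$ indices fail for some large $q$; with $n=(2k-2)(2k-1)+1$ one index survives by pigeonhole. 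This is where the specific value of $n$ actually enters --- as a pigeonhole bound, not a type count.
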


\begin{theorem}[The case $\deg(\varphi(\alpha)) = 2$, version two]
\label{thm:N2v2}
Assume GRH. Let $k \ge 3$ be a positive integer. Then, for $n = 2k-1$, the following holds:

Let $\alpha_1, \ldots , \alpha_n$ be algebraic numbers with $3 \le \deg(\alpha_i) \le k$ such that the numbers $\alpha_1, \ldots, \alpha_n$, together with their conjugates, are multiplicatively independent. Let $S$ denote the set of primes $p$ such that there is a reduction map $\varphi_p$ satisfying $\deg(\varphi_p(\alpha_i)) = 2$ for all $i$. Assume $S$ is infinite (and thus of positive density). For $h > 0$, let $d(h)$ denote the lower density of $p \in S$ (with respect to $S$) such that for such reduction map $\varphi_p$ we have, for some integers $0 \le e_1, \ldots , e_n < 2k-1$, $\ord_p(\varphi_p(\alpha_1^{e_1} \cdots \alpha_n^{e_n})) \ge (p^2 - 1)/h$. Then $d(h) \to 1$ as $h \to \infty$.
\end{theorem}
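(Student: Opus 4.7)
For Theorem~\ref{thm:N2v2} my plan is to combine the Chebotarev-and-global-argument framework used for Theorem~\ref{thm:N2v1} with an extra combinatorial step that exploits the abundance of short products. The key point is that with only $n = 2k-1$ multiplicatively independent $\alpha_i$'s we no longer have enough rank to force any single reduction $\varphi_p(\alpha_i)$ to have large order, but the $(2k-1)^n$ short products $\gamma_e := \prod_i \alpha_i^{e_i}$, $e \in \{0, \ldots, 2k-2\}^n$, provide enough flexibility to compensate.

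Step one, the index bound. Let $H_p := \langle \varphi_p(\alpha_1), \ldots, \varphi_p(\alpha_n)\rangle \subseteq \mathbb{F}_{p^2}^*$ and $I_p := [\mathbb{F}_{p^2}^* : H_p]$. I would bound $I_p$ by some constant $h_0 = h_0(h)$ for all but $o(\pi(x))$ primes $p \in S$ with $p \le x$. For small prime divisors $q$ of $I_p$, under GRH the Chebotarev density theorem applied to $\mathbb{Q}(\zeta_q, \alpha_1^{1/q}, \ldots, \alpha_n^{1/q})$ gives density $O(q^{-n-1})$ for such primes $p$ (using multiplicative independence of the $\alpha_i$ with their conjugates), which sums to an acceptably small quantity. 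For large prime divisors $q$, use a rank-$n$ global argument in the spirit of \cite{matthews}: if $q \mid I_p$, every short product $\gamma_e$ satisfies $\gamma_e^{(p^2-1)/q} \equiv 1 \pmod{\mathfrak{p}}$, and combining the resulting $(2k-1)^n$ norm bounds is just strong enough when $n = 2k-1$ to cover the range of $q$ left open by Chebotarev.

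Step two, the combinatorial extraction. Given $I_p \le h_0$, fix a generator $g$ of $\mathbb{F}_{p^2}^*$ and write $\varphi_p(\alpha_i) = g^{a_i}$; then $\ord_p(\varphi_p(\gamma_e)) = (p^2-1)/\gcd(\sum_i e_i a_i, p^2-1)$. The target is to find $e \in \{0, \ldots, 2k-2\}^n$ with this gcd at most $h$. For each prime $q \mid p^2 - 1$ with $q > h_0$, the hypothesis $I_p \le h_0$ forces some $a_i$ to be coprime to $q$, so the linear form $e \mapsto \sum_i e_i a_i \pmod q$ is nondegenerate and its kernel in $\{0, \ldots, n-1\}^n$ has size at most $n^{n-1}\lceil n/q\rceil$. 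The finitely many remaining primes $q \le h_0$ I would handle by an additional Chebotarev input controlling the joint distribution of the $a_i$'s modulo $q$, which guarantees that the $q$-adic valuation of $\gcd(\sum_i e_i a_i, p^2-1)$ is bounded (with positive probability over $e$) by a constant that can be absorbed into $h$ at the cost of enlarging $h_0$.

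The main obstacle is calibrating the three arguments, Chebotarev, global, and combinatorial counting, against one another so that the estimates stack to give a count of bad $e$'s that is strictly less than $n^n$, and the bad density of $p$'s tends to zero as $h \to \infty$. The reason $n = 2k-1$ suffices, whereas Theorem~\ref{thm:N2v1} required $(2k-2)(2k-1)+1$, is precisely this combinatorial slack: each of the $(2k-1)^n$ short products gives an independent chance, and we only need one to succeed.
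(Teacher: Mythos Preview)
Your Step two contains a gap that cannot be closed with the threshold placed at a constant $h_0$. You propose to find $e \in \{0,\ldots,n-1\}^n$ avoiding, for each prime $q \mid p^2-1$ with $q > h_0$, the kernel of $e \mapsto \sum_i e_i a_i \pmod q$; that kernel indeed has size at most $n^{n-1}$ once $q > n$. But since $h_0 = h_0(h)$ is bounded independently of $p$, the number of primes $q > h_0$ dividing $p^2-1$ is of order $\log p / \log h_0$, so the union bound produces on the order of $n^{n-1}\log p$ forbidden tuples --- far more than the $n^n$ tuples available. The ``combinatorial slack'' you allude to is only a factor of $n = 2k-1$, not a factor of $\log p$, and no calibration bridges that.

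The paper places the threshold for the counting step at $x^{1/(2k)}$ rather than at a constant. Since any prime $q > x^{1/(2k)}$ dividing $p+1 \le x+1$ can appear at most $2k-1$ times in the factorization, there are at most $2k-1$ such large $q$, and now the inequality $N^n - 1 > (2k-1)(N^{n-1}-1)$ with $N = n = 2k-1$ does yield a good $e$. The global argument (Lemma~\ref{lem:globalHigh}) is invoked precisely to guarantee that for each of those $\le 2k-1$ large $q$, not all $\varphi(\alpha_i)$ are $q$th powers; bounding the full index $I_p$ by a constant is stronger than needed and, as you have set things up, forces the subsequent combinatorics into an unwinnable regime. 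The primes $q \le x^{1/(2k)}$ are handled beforehand by steps (i)--(iii), Chebotarev and Brun--Titchmarsh applied to each $\alpha_i$ individually, so that they are disposed of uniformly before $e$ is ever chosen.
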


We could relax the condition about the multiplicative independence a bit, but of course some conditions are needed (cf. Theorems \ref{thm:degN1} and \ref{thm:deg22}), not only to guarantee that the statement holds (so we have to assume e.g. that not all $\alpha_i$ are roots of unity) but also that we benefit from inspecting a higher rank situation (so we have to assume some sort of multiplicative independence condition between $\alpha_i$, where $i$ varies). 

However, we remark that the exact necessary assumptions are not as pretty as in the simpler cases of Theorems \ref{thm:degN1} and \ref{thm:deg22}. Here is one example: If $\alpha = \sqrt{1 + \sqrt{2}}$, then the conjugates of $\alpha$ are $\pm \sqrt{1 + \sqrt{2}}$ and $\pm i\sqrt{\sqrt{2} - 1}$ with the minimal polynomial being $x^4 - 2x^2 - 1$. One now sees that if one takes a large prime $p$ such that the reductions $\varphi(\alpha)$ are of degree $2$, then either the conjugate of $\varphi(\alpha)$ is $\pm \varphi(i \sqrt{1 + \sqrt{2}})$, in which case by the Frobenius endomorphism
$$\varphi(\alpha)^{4(p+1)} = \left(\sqrt{1 + \sqrt{2}}\right)^{8(p + 1)} = \left(\sqrt{1+\sqrt{2}} \cdot \pm i\sqrt{\sqrt{2} - 1}\right)^{4} = 1,$$
or the conjugate is $\varphi(-\alpha)$, in which case
$$\varphi(\alpha)^{2(p-1)} = \left(\varphi(-\alpha)/\varphi(\alpha)\right)^2 = 1.$$
Hence the order of $\varphi(\alpha)$ is bounded by $4p+4$. Of course, the issue is that $\alpha$ is multiplicatively dependent with its (global) conjugates, resulting in $\varphi(\alpha)$ being similarly multiplicatively dependent with its local conjugate $\varphi(\alpha)^p$, which in turn leads to small order.

The interested reader may determine the exact necessary hypotheses on $\alpha_i$ by inspecting the proofs, but for simplicity we have decided to not optimize our assumptions in this sense. Similar remarks apply for the theorems below.

We then move on to higher values of $\deg(\varphi(\alpha))$. We obtain a slight improvement over the easy bound (given by the global argument of Lemma \ref{lem:global}) in a higher rank case when $\deg(\varphi(\alpha)) \in \{3, 4, 6\}$.

\begin{theorem}[The case $\deg(\varphi(\alpha)) \in \{3, 4, 6\}$]
\label{thm:Nk}
Assume GRH. Write $g(3) = 2, g(4) = 3$ and $g(6) = 4$. Let $k \in \{3, 4, 6\}$ and $D \in \mathbb{Z}_+$ be given. There exists a constant $c_D$ depending on $D$ (and $k$) such that the following holds for any $\epsilon > 0$ and $n = \lfloor 1/\epsilon \rfloor$.

Let $\alpha_1, \ldots , \alpha_{n}$ be algebraic numbers which, together with their conjugates, are multiplicatively independent, and for which $\deg(\alpha_i) \le D$. Let $S$ denote the set of primes $p$ such that there is a reduction map $\varphi_p$ satisfying $\deg(\varphi_p(\alpha_i)) = k$ for all $i$. Assume $S$ is infinite. Let $d$ denote the lower density (with respect to $S$) of primes $p \in S$ such that for any such $\varphi_p$ there exists some $1 \le i \le n$ such that
$$\ord_p(\varphi_p(\alpha_i)) \ge p^{g(k) + \epsilon}.$$
Then $d \ge 1 - c_D\epsilon$.

In particular, $d > 0$ for small enough $\epsilon$ and $d \to 1$ as $\epsilon \to 0$.
\end{theorem}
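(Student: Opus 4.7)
The plan is to bound the density, within $S$, of the bad set $T$ of primes $p$ for which $\ord_p(\varphi_p(\alpha_i))<p^{g(k)+\epsilon}$ for every $i$. First I would use Theorem~\ref{thm:degN1}, applied to the various relative norms $N_{K/L}(\alpha_i)$ (with $L$ a subfield of the Galois closure of $\mathbb{Q}(\alpha_i)$), to show that the ``subfield'' portion of $\ord_p(\varphi_p(\alpha_i))$ is essentially maximal for almost all $p$. Writing $\ord_p(\varphi_p(\alpha_i))=L_i\cdot C_i$ with $C_i$ the part dividing $\Phi_k(p)$ and $L_i$ the part dividing $(p^k-1)/\Phi_k(p)$, this reduces the problem to showing that for some $i$ the cyclotomic order satisfies $C_i\ge p^{\phi(k)-1+\epsilon}$ (the numerology is chosen so that combined with the large $L_i$ one gets $L_iC_i\ge p^{g(k)+\epsilon}$).

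Next, Proposition~\ref{prop:mediumPrimeFactor} discards the density-$O(\epsilon)$ set of primes $p$ for which $p^k-1$ has a prime factor in $[p^{1-\epsilon},p^{1+\epsilon}]$. In the remaining primes every prime factor of $\Phi_k(p)$ is either below $p^{1-\epsilon}$ or above $p^{1+\epsilon}$. Focus on the principal sub-case in which $\Phi_k(p)$ admits a prime factor $q>p^{1+\epsilon}$: the cofactor $s=\Phi_k(p)/q$ then satisfies $s\le p^{\phi(k)-1-\epsilon+o(1)}$, and any divisor of $\Phi_k(p)$ smaller than $q$ must divide $s$. Hence in the bad case the cyclotomic projections $\beta_i=\varphi_p(\alpha_i)^{(p^k-1)/\Phi_k(p)}$ all lie in the subgroup of order $s$, so the subgroup $H$ they generate has $|H|\le p^{\phi(k)-1-\epsilon+o(1)}$. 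Running LLL on the relation lattice in $\mathbb{Z}^n$ produces a nonzero tuple $(f_i)$ with $\max_i|f_i|\le|H|^{1/n}\le p^{\epsilon+O(\epsilon^2)}$ for $n=\lfloor 1/\epsilon\rfloor$; Frobenius-invariance then yields the algebraic divisibility $\mathfrak{p}_p\mid\prod_i\alpha_i^{f_i}-\tau\bigl(\prod_i\alpha_i^{f_i}\bigr)$ for the Frobenius $\tau$ on the Galois closure, which is nonzero by the multiplicative-independence hypothesis. Summing the resulting norm bounds over the $O(|H|)$ relevant tuples $(f_i)$ and finitely many Frobenius classes $\tau$ gives a contribution of size $O(x^{1-\Omega(\epsilon^2)}/\log x)=o(\pi(x))$.

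The hard part will be the complementary sub-case in which $\Phi_k(p)$ is $p^{1-\epsilon}$-smooth (no prime factor above $p^{1+\epsilon}$): then $|H|$ can be as large as $\Phi_k(p)\asymp p^{\phi(k)}$, and the LLL bound of the previous paragraph collapses. I would address this by decomposing $\Phi_k(p)=\prod_j q_j^{a_j}$ into its prime-power components (each of size at most $p^{1-\epsilon}$) and running the higher-rank argument separately on each component, where the generated subgroup has size at most $p^{1-\epsilon}$ so that LLL produces good local short relations; the contributions are then aggregated via CRT, with a multiplicative factor depending on $D$ that controls the number of prime-power components that can appear. Combining these estimates with the exceptional set from Proposition~\ref{prop:mediumPrimeFactor}, and running analogous arguments for the extra cyclotomic and linear factors in $p^k-1$ for $k\in\{4,6\}$ (where $\Phi_2(p)$ and a second quadratic cyclotomic factor must also be handled), should produce the advertised density bound $d\ge 1-c_D\epsilon$.
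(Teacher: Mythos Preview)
Your Case~1 (a prime factor $q>p^{1+\epsilon}$ of $\Phi_k(p)$ exists) is essentially the global argument and is fine. The genuine gap is in Case~2, the $p^{1-\epsilon}$-smooth sub-case.

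The plan ``decompose $\Phi_k(p)=\prod_j q_j^{a_j}$, run LLL on each component, aggregate via CRT'' cannot work. If you find short relation vectors $(f_i^{(j)})$ modulo each $q_j^{a_j}$, the CRT lift $(f_i)$ has entries bounded only by $\Phi_k(p)\asymp p^2$, which is useless for the norm estimate. Nor can the ``multiplicative factor depending on $D$'' control the number of prime-power components: $D$ bounds $\deg(\alpha_i)$ and is completely unrelated to $\omega(\Phi_k(p))$, which can be as large as $\log p/\log\log p$. So as written, the smooth case is simply not handled.

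The paper closes this gap with two ingredients you omitted. First, the GRH-conditional Chebotarev density theorem disposes of \emph{small} primes $q<x^c$ with $c=1/(2D)$: for each such $q$ one bounds, via an Artin-symbol count in $\mathbb{Q}(\zeta_q,\alpha_{i,1}^{1/q},\ldots,\alpha_{i,\deg\alpha_i}^{1/q})$, the number of $p$ for which $\varphi(\alpha_i)$ is a $q$th power, and the sum over such $q$ is $o(\pi(x))$. (This is where the dependence on $D$, hence the constant $c_D$, actually enters.) After this step, for the surviving $p$ the index $\Phi_k(p)/C_i$ is a $p^c$-\emph{rough} integer. Second, one then invokes Proposition~\ref{prop:mediumPrimeFactor} in its full strength --- for $p^c$-rough \emph{divisors} $m\in[p^{1-\epsilon},p^{1+1/(n+1)+o(1)}]$, not merely for prime factors --- to excise a further $O_D(\epsilon)$-density set. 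For the remaining $p$ the only possibility is a rough index exceeding $x^{1+1/(n+1)}$, which is exactly your Case~1 and falls to the global argument.

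In short: you applied Proposition~\ref{prop:mediumPrimeFactor} only to prime divisors and tried to compensate with a CRT patch that does not work; the paper instead uses Chebotarev to force roughness and then applies the Proposition to rough composite divisors in the critical window.
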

One could in principle give $c_D$ explicitly in terms of $D$, but we have not pursued this direction, since the resulting bounds are not particularly strong and obtaining such bounds would be rather laborous.

By combining the above result with a theorem of Niederreiter \cite[Theorem 4.1]{niederreiter} on the behavior of linear recurrences modulo primes we obtain the following.

\begin{theorem}[Almost equidistribution of linear recurrences modulo primes]
\label{thm:linRec}
Assume GRH. Let $k \in \{4, 6\}$. For each large enough $n$ there exist constants $c_1(n), c_2(n) > 0$ with $c_1(n) \to 1$ as $n \to \infty$ such that the following holds.

Let $\alpha_1, \ldots , \alpha_n$ be algebraic numbers of degree $k$ which, together with their conjugates, are multiplicatively independent. Let $S$ denote the set of primes $p$ such that there is a reduction map $\varphi_p$ satisfying $\deg(\varphi_p(\alpha_i)) = k$ for all $i$. Assume $S$ is infinite. For each $i$, let $a_{i, 1}, a_{i, 2}, \ldots$ be a linearly recursive sequence of rationals, not all zero, such that the characteristic polynomial of $(a_{i, m})_{m \in \mathbb{Z}}$ is the minimal polynomial of $\alpha_i$. Let $d$ denote the lower density of primes $p \in S$ (with respect to $S$) such that for some $1 \le N \le n$ we have
$$|\{1 \le j \le T_{i, p} : a_{i, j} \equiv r \pmod{p}\}| = \frac{T_{i, p}}{p}\left(1 + O(p^{-c_2(n)})\right)$$
for all $r \pmod{p}$, where $T_{i, p}$ denotes the period of $(a_{i, m})$ modulo $p$. Then $d \ge c_1(n)$.
\end{theorem}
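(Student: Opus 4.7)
The plan is to combine Theorem \ref{thm:Nk} with Niederreiter's equidistribution theorem (\cite[Theorem 4.1]{niederreiter}), which, as indicated in the introduction, requires the period of the sequence modulo $p$ to exceed $p^{d/2 + 1}$ by at least a small power of $p$, where $d$ is the depth of the recurrence. In our setting the depth is $d = k$, so for $k \in \{4, 6\}$ one has $d/2 + 1 = k/2 + 1$, which coincides exactly with $g(k) \in \{3, 4\}$ appearing in Theorem \ref{thm:Nk}. This alignment is the reason the result is stated only for $k \in \{4,6\}$.

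Given $n$ large, I would set $\epsilon := 1/n$ so that $n = \lfloor 1/\epsilon \rfloor$, and apply Theorem \ref{thm:Nk} with this $\epsilon$ and with $D = k$ to the tuple $\alpha_1, \ldots, \alpha_n$. The conclusion is that on a subset of $S$ of lower density (with respect to $S$) at least $1 - c_k/n$, for any reduction map $\varphi_p$ satisfying $\deg(\varphi_p(\alpha_i)) = k$ for all $i$ there is some index $i = i(p, \varphi_p)$ with
$$\ord_p(\varphi_p(\alpha_i)) \ge p^{g(k) + \epsilon}.$$
We then set $c_1(n) := 1 - c_k/n$, which tends to $1$ as $n \to \infty$.

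Next I would translate this lower bound on the order into a lower bound on the period $T_{i,p}$ of the sequence $(a_{i,m})$ modulo $p$. Since the characteristic polynomial of $(a_{i,m})$ coincides with the minimal polynomial of $\alpha_i$, the sequence admits a closed form $a_{i,j} = \sum_{\sigma} c_\sigma \sigma(\alpha_i)^j$, where $\sigma$ ranges over the embeddings of $\mathbb{Q}(\alpha_i)$ into $\overline{\mathbb{Q}}$, and all $c_\sigma$ are nonzero (otherwise the characteristic polynomial would be a proper divisor of the minimal polynomial). For large enough $p$ the reductions of the $c_\sigma$ remain nonzero, the roots $\sigma(\varphi_p(\alpha_i))$ are distinct, and a standard linear independence argument shows $T_{i,p}$ equals the least common multiple of the orders of these roots. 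Because the roots are Galois conjugates of $\varphi_p(\alpha_i)$ over $\mathbb{F}_p$, they share a common order, so $T_{i,p} = \ord_p(\varphi_p(\alpha_i)) \ge p^{g(k) + \epsilon} = p^{k/2 + 1 + \epsilon}$. Applying Niederreiter's theorem to $(a_{i,m}) \pmod{p}$ then yields the desired equidistribution estimate with error of the shape $O(p^{-c_2(n)})$, where $c_2(n) > 0$ may be read off from the quantitative dependence in \cite[Theorem 4.1]{niederreiter} upon substituting $\epsilon = 1/n$.

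The main obstacle is genuinely only bookkeeping: one must verify that the period of the recurrence modulo $p$ does not spuriously drop below $\ord_p(\varphi_p(\alpha_i))$ (which is handled by excluding the finitely many primes for which some reduction of $c_\sigma$ vanishes or for which the minimal polynomial acquires a repeated root modulo $p$), and one must chase through Niederreiter's statement to extract a positive $c_2(n)$ from the $p^\epsilon$-surplus in the period. All of the nontrivial content is in Theorem \ref{thm:Nk}; the present theorem is essentially a corollary.
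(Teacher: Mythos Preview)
Your proposal is correct and follows essentially the same approach as the paper: choose $\epsilon$ with $n = \lfloor 1/\epsilon \rfloor$, apply Theorem \ref{thm:Nk} with $D = k$ to obtain $\ord_p(\varphi_p(\alpha_i)) \ge p^{g(k)+\epsilon} = p^{k/2+1+\epsilon}$ for some $i$ on a set of lower density $\ge 1 - c_k\epsilon$, and then feed this into Niederreiter's equidistribution result. The paper is terser, citing \cite[Corollary 4.2]{niederreiter} directly to get the discrepancy bound $\le p^{k/2}$ and hence relative error $O(p^{-\epsilon})$, so in particular $c_2(n)$ can be taken to be $\epsilon = 1/n$; your extra paragraph justifying $T_{i,p} = \ord_p(\varphi_p(\alpha_i))$ is a detail the paper leaves implicit.
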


Note that Theorems \ref{thm:N2v1}, \ref{thm:N2v2}, \ref{thm:Nk} and \ref{thm:linRec} imply results that hold for ``almost all'' algebraic numbers in a certain sense. For example, we show the following corollary of Theorem \ref{thm:linRec}.

\begin{corollary}[Almost equidistribution modulo primes of almost all linear recurrences]
\label{cor:linRec}
Assume GRH. Let $k \in \{4, 6\}$. For any small enough $\epsilon > 0$ there exist a constant $c_{\epsilon} > 0$ and a number field $K_{\epsilon}$ such that the following holds.

Let $\alpha$ be an algebraic number of degree $k$ such that $\alpha$ is multiplicatively independent with its conjuages and the Galois closure of $\mathbb{Q}(\alpha)$ is linearly disjoint with $K_{\epsilon}$. Assume that the set $S$ of primes $p$ such that the reductions of $\alpha$ to $\mathbb{F}_{p^{\infty}}$ have degree $k$ is infinite. Let $a_1, a_2, \ldots$ be a linearly recursive sequence of rationals, not all zero, such that the characteristic polynomial of the sequence is the minimal polynomial of $\alpha$. Then there exists a subset $S'$ of $S$ of relative upper density $1 - \epsilon$ such that for any $p \in S'$ and any $r \pmod{p}$ one has
$$|\{1 \le j \le T_p : a_j \equiv r \pmod{p}\}| = \frac{T_p}{p}\left(1 + O(p^{-c_{\epsilon}})\right),$$
where $T_p$ is the period of $(a_n)$ modulo $p$.
\end{corollary}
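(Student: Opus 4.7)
The plan is to derive this corollary from Theorem \ref{thm:linRec} by embedding $\alpha$ into a larger collection of algebraic numbers and extracting the single-$\alpha$ conclusion via linear disjointness under GRH. Given $\epsilon > 0$, I would first choose $n = n(\epsilon)$ large enough so that the constant $c_1(n)$ from Theorem \ref{thm:linRec} satisfies $c_1(n) \ge 1 - \epsilon^2$, and set $c_\epsilon := c_2(n)$. Then fix once and for all algebraic numbers $\beta_1, \ldots , \beta_{n-1}$ of degree $k$ such that, together with their conjugates, they are multiplicatively independent, and let $K_\epsilon$ be the Galois closure of $\mathbb{Q}(\beta_1, \ldots , \beta_{n-1})$ over $\mathbb{Q}$. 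The hypothesis that the Galois closure $L_\alpha$ of $\mathbb{Q}(\alpha)$ is linearly disjoint from $K_\epsilon$, combined with $\alpha$'s multiplicative independence from its own conjugates, guarantees that the collection $\alpha, \beta_1, \ldots , \beta_{n-1}$ together with all conjugates remains multiplicatively independent: any non-trivial relation across the full collection would produce an element of $L_\alpha^\times \cap K_\epsilon^\times = \mathbb{Q}^\times$, forcing a relation internal to one side of the collection, which is ruled out.

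Applying Theorem \ref{thm:linRec} to $\alpha_1 := \alpha$, $\alpha_{j+1} := \beta_j$ for $1 \le j \le n - 1$ yields a subset of $S$ of relative lower density $\ge 1 - \epsilon^2$ on which some index $i$ gives the desired equidistribution for the sequence attached to $\alpha_i$ with error $O(p^{-c_\epsilon})$. Writing $B_i \subseteq S$ for the complementary ``bad'' set (primes where $\alpha_i$ fails equidistribution, which via Niederreiter's theorem amounts to an upper bound on $\ord_p(\varphi(\alpha_i))$), this reads $\mathrm{density}(\bigcap_{i=1}^n B_i) \le \epsilon^2$. The crucial step is converting this into an upper bound on $\mathrm{density}(B_1)$ alone. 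The plan is to use the linear disjointness of $L_\alpha$ from $K_\epsilon$ to establish approximate independence, under GRH, between the event $B_1$ (a Frobenius condition in a Kummer-cyclotomic tower over $L_\alpha$) and $\bigcap_{i \ge 2} B_i$ (a Frobenius condition in a corresponding tower over $K_\epsilon$), yielding an approximate factorization
\[
\mathrm{density}\Bigl(\bigcap_{i=1}^n B_i\Bigr) \approx \mathrm{density}(B_1) \cdot \mathrm{density}\Bigl(\bigcap_{i \ge 2} B_i\Bigr).
\]
Combined with the upper bound $\epsilon^2$ and a suitable lower bound on the second factor, this would give $\mathrm{density}(B_1) \le \epsilon$, so that $S' := S \setminus B_1$ is as required.

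The main obstacle is securing the lower bound on $\mathrm{density}(\bigcap_{i \ge 2} B_i)$: for $\beta_j$'s that are multiplicatively independent from their conjugates (as required by Theorem \ref{thm:linRec}), each individual $\mathrm{density}(B_{\beta_j})$ is forced to be small by the same circle of ideas producing Theorem \ref{thm:Nk}, so the naive joint density vanishes and the factorization approach collapses. Overcoming this will likely require a more subtle argument -- for instance, selecting the $\beta_j$ with enough structural constraints on their Galois action to force $\varphi(\beta_j)$ into proper subgroups of $\mathbb{F}_{p^k}^*$ on a positive density of primes while still respecting multiplicative independence, or a symmetry/averaging argument across a family of such $\beta_j$'s that replaces the division by a bound not requiring the denominator to be large. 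A secondary technical issue is the Chebotarev-independence factorization itself: the Kummer-cyclotomic towers over $L_\alpha$ and $K_\epsilon$ share their cyclotomic part, so the independence must be established after conditioning on the Frobenius in $\mathbb{Q}(\zeta_\infty)$ and summing class-by-class, which is standard under GRH but notationally involved.
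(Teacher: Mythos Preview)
Your approach has a genuine gap, which you yourself correctly diagnose: fixing auxiliary $\beta_1,\ldots,\beta_{n-1}$ in advance and then trying to factor $\mathrm{density}(\bigcap_i B_i)$ cannot work, because the very hypotheses you need on the $\beta_j$ (multiplicative independence with conjugates) force each $\mathrm{density}(B_{\beta_j})$ to be small, so the denominator in your division collapses. None of the proposed workarounds is viable either: one cannot choose $\beta_j$ with enough ``structural constraints'' to make $B_{\beta_j}$ large while still satisfying the multiplicative independence hypothesis of Theorem~\ref{thm:linRec}, precisely because that hypothesis is what rules out such constraints.

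The paper avoids this entirely by arguing by contradiction and letting the bad $\alpha$'s themselves serve as the auxiliary collection. If no field $K_\epsilon$ works, then for every number field $K$ there is a bad $\alpha$ (one for which the set $T'(\alpha)$ of primes with small order has relative lower density $>\epsilon/2$ inside $S'(\alpha)$) whose Galois closure is linearly disjoint from $K$; iterating, one builds an arbitrarily large family $\alpha_1,\ldots,\alpha_N$ of bad numbers with pairwise linearly disjoint Galois closures. Now the key is a double-counting argument rather than a factorization: summing $\sum_{i=1}^N |T'(\alpha_i)\cap[1,x]|$ in two ways, the lower bound is $\gg N\epsilon\,\pi(x)$ (since each $\alpha_i$ is bad), while the upper bound comes from Theorem~\ref{thm:Nk}, which says that for some fixed $n$ independent of $N$, almost no prime $p$ lies in $n$ of the sets $T'(\alpha_i)$ simultaneously, so the sum is at most $(n-1)\pi(x)+o(\pi(x))$. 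Taking $N$ large gives the contradiction. The field $K_\epsilon$ is then the Galois closure of the finitely many bad $\alpha$'s that do exist.
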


We are not able to determine a specific $K_{\epsilon}$ which has this property. Compare this with the fact that we know that there are at most two primes for which Artin's primitive root conjecture fails, but we are not able to pinpoint for which primes it potentially may fail \cite{heath-brown}. However, one could in principle calculate an upper bound for the degree of $K_{\epsilon}$ in terms of $\epsilon$ by making the dependence between $D$ and $c_D$ in Theorem \ref{thm:Nk} explicit (similarly as we have an upper bound for the number of primes which may fail Artin's conjecture).

We further note that while we can prove that $S'$ has large upper density in Corollary \ref{cor:linRec}, our methods are insufficient for showing that $S'$ has large (or even positive) lower density. See Remark \ref{rem:lowerDensity} for an explanation.

Finally, we note that a general result with arbitrary $\deg(\alpha), \deg(\varphi(\alpha))$ follows under an assumption on smooth values of (cyclotomic) polynomials at prime arguments (and GRH).

\begin{theorem}[The general case]
\label{thm:general}
Assume GRH. Let $\alpha$ be an algebraic number of degree $k \ge 3$. Assume $\alpha$ is multiplicatively independent with its conjugates. Let $d \ge 2$ be given.

Let $S$ denote the set of primes $p$ such that
\begin{itemize}
\item The largest prime divisor of $p^d - 1$ is less than $p^{1/2k}$.
\item There is at least one reduction $\varphi(\alpha)$ of $\alpha$ of degree $d$.
\end{itemize}
Assume $S$ has positive lower density. For $h > 0$, let $d(h)$ denote the lower density of $p \in S$ (with respect to $S$) such that for all reductions $\varphi_p(\alpha)$ of $\alpha$ of degree $d$ one has $\ord_p(\varphi(\alpha)) \ge (p^d - 1)/h$. Then $d(h) \to 1$ as $h \to \infty$.
\end{theorem}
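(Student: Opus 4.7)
The plan is to emulate Hooley's classical treatment of Artin's primitive root conjecture, adapted to the higher-degree setting. First, observe that $\ord_p(\varphi_p(\alpha)) < (p^d-1)/h$ holds if and only if there exists a prime $q > h$ with $q \mid p^d-1$ such that $\varphi_p(\alpha)$ is a $q$-th power in $\mathbb{F}_{p^d}^*$. By the smoothness hypothesis defining $S$, every such $q$ satisfies $q < p^{1/2k} \leq x^{1/2k}$, so it suffices to bound, for each prime $q$ in $(h, x^{1/2k}]$, the count $N(q,x)$ of primes $p \leq x$ in $S$ satisfying both conditions, and to show that $\sum_{h < q \leq x^{1/2k}} N(q,x) = o(\pi(x))$ as $h \to \infty$.

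For each $q$, I would apply the effective Chebotarev density theorem (under GRH) to the Kummer-type extension $L_q := \widetilde{K}(\alpha_1^{1/q}, \ldots, \alpha_k^{1/q}, \zeta_q)$, where $\widetilde{K}$ is the Galois closure of $\mathbb{Q}(\alpha)$ and $\alpha_1, \ldots, \alpha_k$ are the conjugates of $\alpha$. The multiplicative-independence hypothesis ensures, via Kummer theory, that $[L_q : \mathbb{Q}] \asymp q^{k+1}$ for large $q$, while the conjugacy-closed subset $C_q \subset \Gal(L_q/\mathbb{Q})$ encoding the two conditions has relative density $\asymp 1/q^2$: one factor of $1/q$ comes from the congruence $q \mid p^d - 1$ (a condition in $\Gal(\mathbb{Q}(\zeta_q)/\mathbb{Q})$), and a second from the Kummer condition that some conjugate $\alpha_i$ reduce to a $q$-th power. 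The GRH-Chebotarev main term thus contributes $\asymp \pi(x)/q^2$ to $N(q,x)$, and summing over $q > h$ gives an overall main-term contribution $\ll \pi(x)/h$, which vanishes as $h \to \infty$.

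The main obstacle is controlling the Chebotarev error term for primes $q$ near the upper boundary $x^{1/2k}$. Under GRH this error is of order $\sqrt{x}\,[L_q{:}\mathbb{Q}]\log(x \cdot \operatorname{disc}(L_q)) \asymp q^{k+1}\sqrt{x}\log(qx)$, which overwhelms the main term once $q$ exceeds roughly $x^{1/(2k+6)}$. For such medium-range $q$, the plan is to fall back on the Brun--Titchmarsh inequality: over $\mathbb{Q}(\zeta_q)$ the extension $L_q$ is abelian by Kummer, so for a fixed Frobenius class of $p$ in $\widetilde{K}(\zeta_q)/\mathbb{Q}$ the two conditions translate to $p$ lying in a bounded number of residue classes modulo a small power of $q$. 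Brun--Titchmarsh then yields $N(q,x) \ll x/(q^2 \log x)$, and summation over $q$ in the medium range again contributes $\ll \pi(x)/h$. The delicate technical point is pushing Brun--Titchmarsh cleanly through the non-abelian piece $\widetilde{K}/\mathbb{Q}$, which is handled by conditioning on the splitting class of $p$ in $\widetilde{K}$ and assembling the pieces; combining the small-$q$ and medium-$q$ contributions then gives $d(h) \geq 1 - O(1/h) \to 1$ as required.
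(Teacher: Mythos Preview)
Your overall outline is right, and it matches the paper's approach: GRH-Chebotarev for small $q$, Brun--Titchmarsh near the top of the range, and the smoothness hypothesis killing everything above $x^{1/2k}$. However, there is a genuine error in the middle of the argument that, as written, makes the proof fail.

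The Brun--Titchmarsh step cannot give you $N(q,x)\ll x/(q^2\log x)$. The condition that $\varphi_p(\alpha)$ be a $q$th power in $\mathbb{F}_{p^d}$ is \emph{not} a congruence condition on the rational prime $p$. Yes, $L_q/\widetilde K(\zeta_q)$ is abelian, but class field theory translates splitting in abelian extensions of $\widetilde K(\zeta_q)$ into ray class conditions on prime ideals of $\widetilde K(\zeta_q)$, not into residue classes of $p$ modulo powers of $q$ in $\mathbb{Z}$. (This is already the case for the original Artin problem: ``$a$ is a $q$th power mod $p$'' is not a congruence on $p$, which is precisely why Hooley needs the global argument.) So from Brun--Titchmarsh you only get the congruence saving $q\mid p^d-1$, i.e.\ $N(q,x)\ll x/(q\log x)$, and summing this over a range of the form $[x^{1/(2k+6)},x^{1/2k}]$ gives a positive constant times $\pi(x)$, not $o(\pi(x))$.

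The fix is that your Chebotarev error term is far too pessimistic. The GRH error in \eqref{eq:cheb} is $O(|C_q|\sqrt{x}\log x)$, with $|C_q|$ the size of the conjugacy-closed set, \emph{not} $O([L_q:\mathbb{Q}]\sqrt{x}\log x)$. You yourself computed $|C_q|/[L_q:\mathbb{Q}]\asymp 1/q^2$ and $[L_q:\mathbb{Q}]\asymp q^{k+1}$, so $|C_q|\ll q^{k-1}$. Summing $q^{k-1}\sqrt{x}\log x$ over primes $q\le x^{1/2k}/(\log x)^{100}$ gives $\ll x/(\log x)^{99k}=o(\pi(x))$, so Chebotarev alone carries you all the way up to $x^{1/2k}/(\log x)^{100}$. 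What remains is only the short window $[x^{1/2k}/(\log x)^{100},\,x^{1/2k}]$; there the crude bound $N(q,x)\ll x/(q\log x)$ from Brun--Titchmarsh on the congruence $q\mid p^d-1$ is enough, since $\sum 1/q$ over that window is $o(1)$. This is exactly how the paper organizes the proof.
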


We have not optimized the assumptions (as current technology is far from such results). One could, for example, relax the condition to that for any $d' \mid d$, any prime divisor of $\Phi_{d'}(p)$ is either at most $p^{1/2k}$ or at least $p^{\phi(d') - 1/2 + \epsilon}$. Note that this in particular implies that instead of proving that $\Phi_{d'}(p)$ has no large prime factors it would also be sufficient to show that $\Phi_{d'}(p)$ has a very large prime factor (but again, we are far from showing such results).

We remark that unconditional results relying only on Lemma \ref{lem:global} admit generalizations to our setting, with the strongest results being obtained for $\deg(\varphi(\alpha)) \le 2$. (And as we have already explained, essentially the best unconditional results one can expect are those following from Lemma \ref{lem:global}.) However, such modifications do not require any substantial new ideas, and hence we do not focus on such aspects. Instead we refer to \cite{agrawal-pollack}, where the applicability of the global argument is demonstrated.

\section{Overview of the method}
\label{sec:overview}

\subsection{Hooley's argument for Artin's primitive root conjecture}

We find it easiest to explain our arguments by comparing them to the integer case and focusing on the differences. Hence, while Hooley's proof of Artin's conjecture under GRH is well-known today, we find it worthwhile to briefly go through the argument here.

Let $a$ be an integer, $|a| > 1$, $a$ not a square. Consider the primes $p \le x$ such that $a$ is a primitive root modulo $p$.

For a prime $q$, let $C_q = C_q(a)$ denote the condition that $p \equiv 1 \pmod{q}$ and $a$ is a $q$th power modulo $p$. An integer $a \not\equiv 0 \pmod{p}$ is a primitive root modulo $p$ if and only if no condition $C_q$ is satisfied. For a given $q$ the density of primes $p$ satisfying $C_q$ is given by the Chebotarev density theorem. (In the generic case this density is $1/q(q-1)$, but this is not true, for example, if $a$ is a $q$th power of an integer.)

One may hence try to calculate the density of primes $p$ such that $a$ is a primitive root modulo $p$ by an inclusion-exclusion argument. This, however, does not work due to the accumulation of error terms.

To go around this issue, proceed as follows: Let $f(x) = \frac{1}{6}\log x$. We first show, under GRH, that the number of primes $p \le x$ not satisfying any of the conditions $C_q, q \le f(x)$ is (approximately) what it should be. We then show, again under GRH, that the number of primes $p \le x$ satisfying at least one condition $C_q, q > f(x)$ is $o(\pi(x))$.

The contribution of primes $q \le f(x)$ is handled by he Chebotarev density theorem and inclusion-exclusion. It is non-trivial (and interesting) to actually calculate the arising density, but we do not focus here on such aspects.

The contribution of primes $q > f(x)$ is handled as follows. For each $f(x) \le q \le \sqrt{x}/(\log x)^{100}$ one bounds the number of $p \le x$ satisfying $C_q$ by the Chebotarev density theorem (with GRH used to bound the error term). For $\sqrt{x}/(\log x)^{100} \le q \le \sqrt{x}(\log x)^{100}$ one simply estimates the number of primes $p \le x, p \equiv 1 \pmod{q}$ by Brun-Titchmarsh. Finally, for $\sqrt{x}(\log x)^{100} \le q \le x$ one applies the global argument mentioned in the introduction (Lemma \ref{lem:global}).

Hence, Hooley's argument can be seen to consist of four steps.
\begin{itemize}
\item[(i)] Consider the primes $q \le f(x)$. Method: Chebotarev density theorem and inclusion-exclusion.
\item[(ii)] Consider the primes $q \in [f(x), \sqrt{x}/(\log x)^{100}]$. Method: Chebotarev density theorem.
\item[(iii)] Consider the primes $q \in [\sqrt{x}/(\log x)^{100}, \sqrt{x}(\log x)^{100}]$. Method: Brun-Titchmarsh.
\item[(iv)] Consider the primes $q \in [\sqrt{x}(\log x)^{100}, x]$. Method: The global argument.
\end{itemize}
(Note that trivially all $p \le x$ fail the conditions $C_q$ for all $q > x$.)

Note that by letting $f(x)$ grow slowly enough one could work out step (i) unconditionally without the need of GRH. Hence this step is rather uninteresting from an analytic point of view, so our focus is on the steps (ii)--(iv).

In contrast to the Chebotarev density theorem and the Brun-Titchmarsh equality utilized steps (i)--(iii), where each individual $q$ is considered separately, in step (iv) one consider all conditions $C_q, q \ge \sqrt{x}(\log x)^{100}$ simultaneously. This distinction is why we call the method of Lemma \ref{lem:global} the global argument.

\subsection{The case $\deg(\varphi(\alpha)) = 1$ -- Theorem \ref{thm:degN1}}

Consider then the setup of Theorem \ref{thm:degN1}. The result is best viewed in terms of the order of the reduction of $\alpha$ under the maps $O_{\mathbb{Q}(\alpha)} \to O_{\mathbb{Q}(\alpha)}/\mathfrak{p}$ for prime ideals $\mathfrak{p}$ of $O_{\mathbb{Q}(\alpha)}$. Note that almost all prime ideals of $O_{\mathbb{Q}(\alpha)}$ are of degree $1$, that is, their norms are primes.

The analytic methods required in the proof are largely the same as those discussed above. Step (iii) needs no change. Step (iv) requires some minor cosmetic changes, but the proof is otherwise the same. In step (ii) one applies the Chebotarev density theorem with the base field being $\mathbb{Q}(\alpha)$ instead of $\mathbb{Q}$.

One easily sees why the assumption that $\alpha$ is not a root of unity is needed.

\subsection{The case $\deg(\alpha) = \deg(\varphi(\alpha)) = 2$ - Theorem \ref{thm:deg22}}

The situation is drastically different when one considers reductions not of degree $1$ over $\mathbb{F}_p$, as very few prime ideals of $\mathbb{Q}(\alpha)$ are of degree over $1$. Hence we take another point of view.

Consider the case $\deg(\alpha) = 2$. Let $S$ be the set of primes $p$ such that the reductions of $\alpha$ to $\mathbb{F}_{p^{\infty}}$ have degree $2$ (and the reductions are distinct). 

Let $\varphi(\alpha)$ be either of the reductions of $\alpha$ to $\mathbb{F}_{p^2}$ with $p \in S$. Let $C_q$ be the condition that $\varphi(\alpha)$ is not a $q$th power for $q \mid p^2 - 1$. (Note that in this case the conjugate of $\varphi(\alpha)$ is not a $q$th power either.)

Luckily, the expression $p^2 - 1$ factorizes as $(p-1)(p+1)$, so similarly as in Hooley's argument, it suffices to consider the conditions $C_q$ with $q$ roughly at most $x$. Some care is still needed. We consider the conditions $C_q, q \mid p-1$, and $C_q, q \mid p+1$ separately.

The case $q \mid p-1$ essentially reduces to the integer case: If $\varphi(\alpha)$ is a $q$th power, then 
$$N(\alpha)^{(p-1)/q} = N_{\mathbb{F}_{p^2}/\mathbb{F}_p}(\varphi(\alpha))^{(p-1)/q} = (\varphi(\alpha)^{p+1})^{(p-1)/q} = 1,$$ 
and hence the norm $N(\alpha)$ is a $q$th power. (Here we slightly abuse notation by viewing $N(\alpha) \in \mathbb{Z}$ as an element of $\mathbb{F}_{p^2}$.)

In the case $q \mid p+1$ one notes that
$$(\varphi(\alpha)^p/\varphi(\alpha))^{(p+1)/q} = 1,$$
so if $\alpha'$ is the conjugate of $\alpha$ (different from $\alpha$), the reduction of $\alpha'/\alpha$ is a perfect $q$th power in $\mathbb{F}_{p^2}$. One then applies the ideas of steps (ii), (iii) and (iv) to bound the contribution of such $q$. The situation is not very different from the one in Theorem \ref{thm:degN1}.

However, there is one crucial technical detail to be noted in step (ii). In Hooley's argument, one applies the Chebotarev density theorem to bound the number of primes $p \le x$ which split in $\mathbb{Q}(\zeta_q, a^{1/q})/\mathbb{Q}$, and similarly in the setup of Theorem \ref{thm:degN1} one considers the split primes in $\mathbb{Q}(\zeta_q, a^{1/q})/\mathbb{Q}(\alpha)$. In our case case the relevant Artin symbol condition is not a splitting condition: the relevant condition is that $p$ does not split in $\mathbb{Q}(\alpha)/\mathbb{Q}$, and that the orbits of $\zeta_q, \alpha^{1/q}$ and $\alpha'^{1/q}$ under the Artin symbol of $p$ with respect to
$$\mathbb{Q}(\zeta_q, \alpha^{1/q}, \alpha'^{1/q})/\mathbb{Q}$$
have size $2$. This is an issue, as the GRH-conditional Chebotarev density theorem gives better error terms for smaller conjugacy classes (and hence in particular one has the best error terms for splitting conditions).

The way around this, introduced by Roskam in \cite{roskamQuad}, is to show that if $p$ has Artin symbol given by the condition above, then $p$ splits in some relatively large subfield of $\mathbb{Q}(\zeta_q, \alpha^{1/q}, \alpha'^{1/q})$. This results in a small enough error term.

Again, from this argument one sees where the necessary conditions of Theorem \ref{thm:deg22} arise from.

\subsection{The case $\deg(\alpha) = 3, \deg(\varphi(\alpha)) = 2$ -- Theorem \ref{thm:deg32}}
\label{sec:overview:32}

Take then an algebraic number $\alpha$ of degree $3$, and consider those primes $p$ such that some reductions of $\alpha$ to $\mathbb{F}_{p^{\infty}}$ have degree $2$ (and the reductions are distinct). Let the set of such primes $p$ be $S$. Let $\varphi(\alpha)$ denote a degree $2$ reduction of $\alpha$ to, and let $C_q$ be the condition that $\varphi(\alpha)$ is not a $q$th power for $q \mid p^2 - 1$. 

The conditions $q \mid p-1$ again are treated similarly as in the integer case.

If $q \mid p+1$, then, as described in the previous subsection, we encounter a problem in step (ii) when applying the GRH-conditional Chebotarev density theorem: our Artin symbol condition is not a splitting condition, and hence  the error term is too large. Furthermore, it seems that in general there is no large subfield $K'_q$ of
$$K_q := \mathbb{Q}(\zeta_q, \alpha_1^{1/q}, \alpha_2^{1/q}, \alpha_3^{1/q}),$$
where $\alpha_i$ are the conjugates of $\alpha$, such that if $p \in S$ fails the condition $C_q, q \mid p + 1$, then $p$ splits in $K'_q$, so one cannot apply Roskam's idea here.

Hence we have to take a different route, utilizing other tools to obtain a smaller error term. Our argument may be expressed as follows:
\begin{itemize}
\item[(i)] Consider the primes $q \le f(x)$.
\item[(ii, a)] Consider the primes $q \in [f(x), x^{1/6}/(\log x)^{100}]$. Method: GRH-conditional Chebotarev density theorem.
\item[(ii, b)] Consider the primes $q \equiv 2 \pmod{3}$, $q \in [x^{1/6}/(\log x)^{100}, x^{1/4}/(\log x)^{100}]$. Method: GRH-conditional Chebotarev density theorem with known cases of Artin's conjecture on L-functions.
\item[(iii, a)] Consider the primes $q \equiv 1 \pmod{3}, q \in [x^{1/6}/(\log x)^{100}, x^{1/2}/(\log x)^{100}]$. Method: The Bombieri-Vinogradov theorem.
\item[(iii, b)] Consider the primes $q \equiv 2 \pmod{3}$, $q \in [x^{1/4}/(\log x)^{100}, x^{1/2}/(\log x)^{100}]$. Method: The Bombieri-Vinogradov theorem.
\item[(iii, c)] Consider the primes $q \in [x^{1/2}/(\log x)^{100}, x^{1/2}(\log x)^{100}]$. Method: The Brun-Titchmarsh inequality.
\item[(iv)] Consider the primes $q \in [\sqrt{x}(\log x)^{100}, x]$. Method: The global argument.
\end{itemize}
Compared to the previous proofs the new ingredients are those in steps (ii, b) and (iii, a), (iii, b). The GRH-conditional Chebotarev density theorem is strong enough to only cover the region $[f(x), x^{1/6}/(\log x)^{100}]$, but if one further assumes Artin's conjecture for L-functions, one has an even smaller error term which allows one to cover $q$ as large as $x^{1/4 - \epsilon}$. In our case the field extension $K_q$ is relatively nice (more precisely, its Galois group is ``almost nilpotent''), so that Artin's conjecture has been proven for $K_q$, at least when $q \equiv 2 \pmod{3}$. We take advantage of this in step (ii, b).

However, this still leaves a quite large gap between the regions covered by the Chebotarev density theorem and the global argument. While the Brun-Titchmarsh inequality can be used to treat short regions of form $[x^{1/2}/(\log x)^{100}, x^{1/2}(\log x)^{100}]$, for larger regions one encounters a loss of density: a positive proportion of primes $p$ are such that $p+1$ has a prime divisor in the interval $[x^{1/4}, x^{1/2}]$ (at least conjecturally). For this reason we use the stronger Bombieri-Vinogradov theorem in steps (iii, a) and (iii, b). This still leads to a loss of density, but fortunately this loss is strictly smaller than one.

We remark that the idea of imposing a loss of density has been used before at least in \cite{nongkynrih}, where it is shown that Artin's primitive root conjecture for $a = 2$ follows by a hypothesis on the zeros of L-functions weaker than GRH, at the cost of obtaining a smaller (lower) density of primes $p$ for which $a$ is a primitive root modulo $p$ than what is expected to hold.

\subsection{The case $\deg(\varphi(\alpha)) = 2$ -- Theorems \ref{thm:N2v1} and \ref{thm:N2v2}}

We are not able to adapt the argument of the previous subsection to the case where the reduction is of degree $2$ but $\alpha$ itself is of arbitrary degree, as the region the Chebotarev density theorem is able to handle becomes smaller as $\deg(\alpha)$ increases (and hence a similar procedure would result in a loss of density greater than $1$).

Hence we weaken our conclusions to a result on a higher rank situation. The advantage is that now the global argument is able to handle a larger region.

Theorem \ref{thm:N2v1} is obtained by a rather standard application of a higher rank variant of the global argument attributed to Matthews \cite{matthews}. Theorem \ref{thm:N2v2} is obtained by applying the global argument in a slightly different way. The latter result was inspired by a recent work of Agrawal and Pollack \cite{agrawal-pollack}, though in our case the argument is more straightforward, as we assume GRH and thus may use the Chebotarev density theorem to handle $q$ less than some small power of $x$.

\subsection{The case $\deg(\varphi(\alpha)) \in \{3, 4, 6\}$ -- Theorem \ref{thm:Nk}}

The nature of the problem changes substantially when the reduction is of degree greater than two. As we already explained in the introduction, the problem lies in that for primes $p \le x$ we have to consider the conditions $C_q$ for $q$ as large as $x^2$ (or even larger if $\varphi(\alpha)$ has very high degree).

Consider, for simplicity, the case $\deg(\varphi(\alpha)) = 3$ (though the method adapts to $\deg(\varphi(\alpha))$ equal to $4$ or $6$ as well). Again, the conditions $C_q, q \mid p-1$ are easily treated, so we consider $C_q, q \mid p^2 + p + 1$.

The global argument only suffices to handle $q > x^{3/2 + \epsilon}$, though weakening the conclusion to a higher rank analogue allows one to treat the region $[x^{1+\epsilon}, x^2]$ for any fixed $\epsilon > 0$.

We treat the region $[x^{1-\epsilon}, x^{1+\epsilon}]$ by showing that $p^2 + p + 1$ has relatively rarely a prime divisor $q$ in this interval. The method of proof is by exponential sums, more precisely to Weyl sums over roots of quadratic congruences.

In fact, we need a slightly stronger result, as proving that $p^2 + p + 1$ has no prime divisor in $[x^{1-\epsilon}, x^{1+\epsilon}]$ and that the order is at least $p^{2 - \epsilon}$ does not imply that $\ord_p(\varphi(\alpha))$ is greater than $p^{2+\epsilon}$ -- it could very well be the case that $p^2 + p + 1$ is divisible by two primes $q_1, q_2 \approx x^{1/2}$ and that $C_{q_1}, C_{q_2}$ both are satisfied. Hence, we actually show that $p^2 + p + 1$ is rarely divisible by an $x^c$-rough number $Q \in [x^{1-\epsilon}, x^{1+\epsilon}]$ for any fixed $c > 0$. We still need to show that $C_q, q < x^c$ almost never fail, but this follows from the GRH-conditional Chebotarev density theorem.

If one could show that $p^2 + p + 1$ rarely has any divisor in the range $[x^{1-\epsilon}, x^{1+\epsilon}]$, then one could obtain the result of Theorem \ref{thm:Nk} unconditionally. (Results of this type do hold for shifted primes \cite[Theorem 2]{erdos-ram}, \cite[Theorem 1.6]{ford}.)

\section{Preliminaries}
\label{sec:prel}

We mention a couple of facts we will be using throughout the proofs.

The GRH-conditional Chebotarev density theorem states that for any Galois extension $K/\mathbb{Q}$ and conjugacy class $C \subset \Gal(K/\mathbb{Q})$ one has
\begin{align}
\label{eq:cheb}
|\{p \le x : \left(\frac{K/\mathbb{Q}}{p}\right) = C\}| = \Li(x)\frac{|C|}{[K : \mathbb{Q}]} + O\left(\frac{|C|}{[K : \mathbb{Q}]}\sqrt{x}\log d_K  + |C|\sqrt{x}\log x\right),
\end{align}
where $d_K$ is the discriminant of $K/\mathbb{Q}$ (see the works of Lagarias-Odlyzko \cite{lo} and Serre \cite{serre}). 

In our applications the contribution of the discriminant is not too large: By \cite[Proposition 13]{perucca-sgobba-tronto} if $d_{K_m}$ is the discriminant of
\begin{align*}
K_m = K(\zeta_m, \alpha_1^{1/m}, \ldots , \alpha_n^{1/m}),
\end{align*}
where $\alpha_i \in K$ are non-zero and not roots of unity and $K$ is a given number field, then we have, for $m$ large,
\begin{align*}
\log d_{K_m} \ll [K_m : \mathbb{Q}]\log m.
\end{align*}
The implied constant depends on $K$ and $\alpha_i$ but is independent of $m$.

We note that a trivial bound for $[K_m : \mathbb{Q}]$ is $\ll \phi(m)m^n$ (where again the implied constant may depend on $K$ and $\alpha_i$). This bound is essentially tight \cite[Theorem 9]{perucca-sgobba-tronto}: if $\alpha_i$ are multiplicatively independent, then $[K_m : \mathbb{Q}] \gg \phi(m)m^n$.

\section{The case $\deg(\alpha) = \deg(\varphi(\alpha)) = 2$ -- Proof of Theorem \ref{thm:deg22}}

Our proof is a modification Roskam's \cite{roskamQuad} argument (see in particular Section 4 there). We first consider the case where $|N(\alpha)| \neq 1$ -- the case $|N(\alpha)| = 1$ then follows with minor modifications, as we explain at the end of the proof.

Let $S$ be the set of primes $p$ such that $\deg(\varphi(\alpha)) = 2$, i.e. $S$ consists of the primes $p$ inert in $\mathbb{Q}(\alpha)$. In the proof we only consider $p \in S$. Let $H$ be a parameter tending to infinity arbitrarily slowly. Let $h = H!$. For each prime $q$, let $\ell(q)$ be the smallest power of $q$ not dividing $h$, and let $C_q = C_q(h)$ be the condition
\begin{center}
$p \in S$ satisfies $p^2 \equiv 1 \pmod{\ell(q)}$ and $\varphi(\alpha)$ is a perfect $\ell(q)$th power in $\mathbb{F}_{p^2}$.
\end{center}
Note that $\ell(q) = q$ for all but very small $q$, and that if $p \in S$ satisfies none of the conditions $C_q$, then $(p^2 - 1)/h \mid \ord_p(\varphi(\alpha))$. We show that the density of primes $p \in S$ not satisfying any of the conditions $C_q$ is $1$ for any $H \to \infty$, which then implies the result.

If $p$ satisfies $C_q$, then $p$ is either such that $\ell(q) \mid p-1$ and $N(\alpha)$ is a perfect $\ell(q)$th power in $\mathbb{F}_p$ , or $p$ is such that $\ell(q) \mid p+1$ and $\varphi(\alpha_2/\alpha_1)$ is a perfect $\ell(q)$th power in $\mathbb{F}_{p^2}$, where $\alpha_1 = \alpha$ and $\alpha_2$ are the conjugates of $\alpha$. Let these conditions be $C_q^1$ and $C_q^2$, respectively.

Let $C_q(x)$ (resp. $C_q^1(x)$, $C_q^2(x)$) denote the number of primes $p \le x$ satisfying the condition $C_q$ (resp. $C_q^1$, $C_q^2$). 

We perform the steps (i)--(iii) in our proof sketch, showing that
$$\sum_{q \le \sqrt{x}(\log x)^{100}} C_q(x) = o(\pi(x)).$$
 
Step (i). Note that
$$\sum_{q \le H} C_q(x) \le \sum_{q \le H} |\{p \le x : p^2 \equiv 1 \pmod{\ell(q)}\}|,$$
and by Brun-Titchmarsh the contribution of this sum is $o(\pi(x))$.
 
Step (ii). We to show that
$$\sum_{H < q \le \sqrt{x}/(\log x)^{100}} C_q^i(x) = o(\pi(x))$$
for $i \in \{1, 2\}$.

We first consider the easier case $i = 1$. In this case $N(\alpha)$ is a $q$th power modulo $p$ with $p \equiv 1 \pmod{q}$, so $p$ splits in
$$K_q^1 := \mathbb{Q}(\zeta_q, N(\alpha)^{1/q}).$$
Hence, the GRH-conditional Chebotarev density theorem gives
$$C_q^1(x) = \Li(x)\frac{1}{[K_q^1 : \mathbb{Q}]} + O(\sqrt{x}(\log x)),$$
and since $N(\alpha) \neq \pm 1$, one has $[K_q^1 : \mathbb{Q}] \gg q(q-1)$. (Recall the preliminaries from Section \ref{sec:prel}.) This proves the required result for $i = 1$.

We then consider the more difficult case $i = 2$. Let
$$K_q^2 := \mathbb{Q}(\zeta_q, \alpha_1^{1/q}, \alpha_2^{1/q}).$$
One sees that $p$ satisfies $C_q^2$ if and only if the Artin symbol $\left(\frac{K_q^2/\mathbb{Q}}{p}\right)$ maps $\zeta_q \to \zeta_q^{-1}$, $\alpha_1 \to \alpha_2 \to \alpha_1$ and is such that the order of $\alpha_j^{1/q}$ under the Artin symbol is $2$ for $j \in \{1, 2\}$. Hence, if $p \in C_q^2$, then the order of the Artin symbol in $G := \Gal(K_q^2/\mathbb{Q})$ is two. 

Let $C$ be the set of maps which are one of the following forms:

Form 1: There is an integer $a$ such that
$$\zeta_q \to \zeta_q^{-1},$$
$$\alpha_1^{1/q} \to \zeta_q^a\alpha_2^{1/q},$$
and
$$\alpha_2^{1/q} \to \zeta_q^a\alpha_1^{1/q}.$$

Form 2: There is an integer $b$ such that
$$\zeta_q \to \zeta_q,$$
$$\alpha_1^{1/q} \to \zeta_q^b\alpha_1^{1/q}$$
and
$$\alpha_2^{1/q} \to \zeta_q^b\alpha_2^{1/q}.$$

\begin{lemma}
$C$ is a normal subgroup of $G$.
\end{lemma}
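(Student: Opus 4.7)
The plan is to parametrize $G$ in coordinates and then verify the subgroup and normality properties by direct calculation. Every $g \in G$ is specified by a tuple $(c, u_1, u_2, \pi) \in (\mathbb{Z}/q\mathbb{Z})^* \times (\mathbb{Z}/q\mathbb{Z})^2 \times S_2$ via $g(\zeta_q) = \zeta_q^c$ and $g(\alpha_i^{1/q}) = \zeta_q^{u_i}\alpha_{\pi(i)}^{1/q}$; a short calculation yields the composition rule that the product of elements with data $(c, u_i, \pi)$ and $(d, v_i, \pi')$ has data $(cd,\ cv_i + u_{\pi'(i)},\ \pi\pi')$. In this language, $C$ is the set of elements whose data satisfies $u_1 = u_2$ together with $(c, \pi) \in H := \{(1, \id), (-1, (12))\}$, and $H$ is a subgroup of $(\mathbb{Z}/q\mathbb{Z})^* \times S_2$ isomorphic to $\mathbb{Z}/2\mathbb{Z}$.

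For the subgroup property, since $C$ is a finite set containing the identity (Form 2 with parameter $0$), it suffices to check multiplicative closure. If $g, h \in C$ have data $(c, u, u, \pi)$ and $(d, v, v, \pi')$, then the product has $(cd, \pi\pi') \in H$ by closure of $H$, and both $u$-coordinates equal $cv + u$ -- independent of $i$, since $u_1 = u_2 = u$ forces $u_{\pi'(i)} = u$ regardless of $i$.

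For normality, first solve $g g^{-1} = \id$ in coordinates to find that $g^{-1}$ has data $(c^{-1}, -c^{-1}u_{\pi(1)}, -c^{-1}u_{\pi(2)}, \pi)$. Applying the composition rule twice then gives that for $g \in G$ with data $(c, u_i, \pi)$ and $h \in C$ with data $(d, v, v, \pi')$, the conjugate $ghg^{-1}$ has $c$-part $d$, $\pi$-part $\pi\pi'\pi$, and $i$-th $u$-part equal to $-du_{\pi(i)} + cv + u_{\pi'\pi(i)}$. The outer coordinates clearly lie in $H$: $d \in \{\pm 1\}$, and $\pi\pi'\pi$ equals $\id$ when $\pi' = \id$ and equals $(12)$ when $\pi' = (12)$, regardless of $\pi \in S_2$. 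The crucial check is that the two $u$-coordinates agree. In the Form 2 case ($d = 1$, $\pi' = \id$) the $u_{\pi(i)}$ terms cancel and both components equal $cv$. In the Form 1 case ($d = -1$, $\pi' = (12)$) both components simplify to $u_1 + u_2 + cv$, using the identity $u_{\pi(i)} + u_{(12)\pi(i)} = u_1 + u_2$ valid for either value of $\pi(i)$. This preservation of the diagonal condition $u_1 = u_2$ under conjugation, driven by the matched twist built into Forms 1 and 2, is the only non-automatic point and the main obstacle in the computation.
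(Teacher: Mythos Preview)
Your proof is correct and carries out precisely the direct computation the paper alludes to but omits; the coordinate parametrization of $G$ and the case split on Form~1 versus Form~2 is the natural way to execute this check.
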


\begin{proof}
One may check this by a direct computation, which we omit here.
\end{proof}

Note now that if $p$ satisfies $C_q^2$, then $\left(\frac{K_q^2/\mathbb{Q}}{p}\right) \subset C$. Hence, if we let $F$ denote the fixed field of $C$, then $p$ splits in $F$. Furthermore, $|C| \le 2q$, so
$$[F : \mathbb{Q}] \ge \frac{[K_q^2 : \mathbb{Q}]}{2q} \gg q^2.$$
Hence the Chebotarev density theorem gives the required result.

Step (iii). Note that
\begin{align*}
\sum_{\sqrt{x}/(\log x)^{100} \le q \le \sqrt{x}(\log x)^{100}} C_q(x) \le \\
\sum_{\sqrt{x}/(\log x)^{100} \le q \le \sqrt{x}(\log x)^{100}} |\{p \le x : p \equiv 1 \pmod{q}\}| + |\{p \le x : p \equiv -1 \pmod{q}\}| = \\
o(\pi(x))
\end{align*}
by Brun-Titchmarsh.

Step (iv). Finally, we prove that the number of primes $p \le x$ satisfying at least one of $C_q^i, q > \sqrt{x}(\log x)^{100}$ is $o(\pi(x))$ for $i \in \{1, 2\}$. Let $T_i$ be the product of such primes $p$.

Note that
$$T_1 \mid \prod_{n \le \sqrt{x}/(\log x)^{100}} N(\alpha)^n - 1,$$
from which one obtains $\log T_1 \ll x/(\log x)^{200} = o(\pi(x)),$ implying the result for $i = 1$.

Similarly,
$$T_2 \mid N\left(\prod_{n \le \sqrt{x}/(\log x)^{100}} \left(\frac{\alpha_2}{\alpha_1}\right)^n - 1\right) = \prod_{n \le \sqrt{x}/(\log x)^{100}} N(\alpha_2^n/\alpha_1^n - 1),$$
from which one gets $\log T_2 = o(\pi(x))$, implying the result for $i = 2$.

\begin{remark}
In the case $N(\alpha) = \pm 1$ one omits the consideration of the conditions $C_q^1$ and proceeds otherwise similarly. In step (ii) one notes that in the definition of $C$ the numbers $a$ and $b$ must satisfy certain conditions due to $\alpha_2 = \pm \alpha_1^{-1}$. This leads to $|C| = O(1)$. Hence, the fixed field $F$ is of degree $[F : \mathbb{Q}] \gg [K_q : \mathbb{Q}] \gg q^2$, from which step (ii) is completed as before.
\end{remark}

\section{The case $\deg(\alpha) = 3$, $\deg(\varphi(\alpha)) = 2$ -- Proof of Theorem \ref{thm:deg32}}

Let $\alpha$ be an algebraic number of degree $3$ with conjugates $\alpha_1 = \alpha, \alpha_2, \alpha_3$, let $S$ be as in the theorem and let $K = \mathbb{Q}(\alpha_1, \alpha_2, \alpha_3)$. The condition that $S$ is infinite is equivalent with $[K : \mathbb{Q}] = 6$. For $p \in S$, let $\varphi(\alpha)$ denote a reduction of $\alpha$ to $\mathbb{F}_{p^2}$ having degree $2$ over $\mathbb{F}_p$.

Define $h, H, q$ and $\ell(q)$ similarly as in the proof of Theorem \ref{thm:deg22}. Let $C_q^1 = C_q^1(h)$ denote the condition
\begin{center}
$p \in S$ satisfies $p \equiv 1 \pmod{\ell(q)}$ and $\varphi(\alpha)$ is a perfect $\ell(q)$th power in $\mathbb{F}_{p^2}$
\end{center}
and let $C_q^2 = C_q^2(h)$ be the condition
\begin{center}
$p \in S$ satisfies $p \equiv -1 \pmod{\ell(q)}$ and $\varphi(\alpha)$ is a perfect $\ell(q)$th power in $\mathbb{F}_{p^2}$.
\end{center}
Again, let $C(x)$ denote the number of primes $p \le x$ satisfying the condition $C$.

We first note that the number of primes $p \le x$ satisfying at least one of the conditions $C_q^1$ is $o(\pi(x))$. Indeed: Let $p$ be a prime. Label $\alpha_1, \alpha_2, \alpha_3$ such that $\deg(\varphi(\alpha_3)) = 1$. Now
$$\varphi(\alpha_1\alpha_2) = N(\alpha)/\varphi(\alpha_3)$$
in $\mathbb{F}_{p^2}$. Hence $p$ satisfies $C_q^1$ if and only if
\begin{center} 
$p \in S$ satisfies $p \equiv 1 \pmod{\ell(q)}$ and the degree $1$ reduction of $N(\alpha)/\alpha$ to $\mathbb{F}_{p^2}$ is a perfect $\ell(q)$th power in $\mathbb{F}_p$.
\end{center}
Hence, the number of $p \in S, p \le x$ satisfying $C_q^1$ for some $q$ is the same as the number of $p \in S, p \le x$ such that $\ord_p(N(\alpha)/\alpha)$ is not divisible by $(p-1)/h$. We now conclude by applying Theorem \ref{thm:degN1} to the algebraic number $N(\alpha)/\alpha$. (Note that $N(\alpha)/\alpha$ is not a root of unity: If it were, then its norm $N(\alpha)^3/N(\alpha) = N(\alpha)^2$ would be $1$, so that $N(\alpha) = \pm 1$ and $\alpha$ would be a root of unity, contradicting the assumptions.)

Hence, from now on, we only consider the conditions $C_q^2$. We again first consider the case $|N(\alpha)| \neq 1$ and then explain the modifications needed to treat the case $|N(\alpha)| = 1$.

We proceed as explained in Section \ref{sec:overview:32}, showing that that $$\sum_{1 \le \sqrt{x}(\log x)^{100}} C_q^2(x) \le \pi(x)\log \sqrt{6} + o(\pi(x))$$
(steps (i)--(iii)) and that the number of $p \le x$ satisfying at least one of $C_q^2, q > \sqrt{x}(\log x)^{100}$ is $o(\pi(x))$ (step (iv)). Steps (i), (iii, c) and (iv) are executed similarly as in the proof of Theorem \ref{thm:deg22}, so we do not cover them in detail.

We first consider the steps (ii, a) and (ii, b). One notes that if $p$ satisfies $C_q$, then the Artin symbol of $p$ with respect to
$$K_q := \mathbb{Q}(\zeta_q, \alpha_1^{1/q}, \alpha_2^{1/q}, \alpha_3^{1/q})$$
is such that for any $\sigma \in \left(\frac{K_q/\mathbb{Q}}{p}\right)$ one has
\begin{itemize}
\item $\sigma(\zeta_q) =  \zeta_q^{-1}$
\item There exist distinct $i, j \in \{1, 2, 3\}$ such that $\sigma(\alpha_i) = \alpha_j$ and $\sigma(\alpha_j) = \alpha_i$.
\item For these $i, j$ one has $\sigma(\sigma(\alpha_i^{1/q})) = \alpha_i^{1/q}$ and $\sigma(\sigma(\alpha_j^{1/q})) = \alpha_j^{1/q}$.
\end{itemize}
(Conversely, any $\sigma$ satisfying the above properties belongs to the Artin symbol.) Let $C$ denote the set of such maps. Then $C$ is a union of conjugacy classes.

\begin{lemma}
\label{lem:sizeC}
We have $|C| \ll q^2$. If $|N(\alpha)| = 1$, then $|C| \ll q$.
\end{lemma}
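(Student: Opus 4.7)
The plan is to parametrize the elements of $C$ explicitly and count them. Every $\sigma \in C$ restricts to an element of $\Gal(K/\mathbb{Q}) \cong S_3$ that is a transposition $(i\,j)$ (and necessarily fixes the remaining index $k$), so there are exactly three choices for the restriction to $K$. Having fixed this transposition, the fact that $\sigma$ lies in $\Gal(K_q/K(\zeta_q))$ after composing with a suitable element means that $\sigma$ is determined on $K_q$ by three exponents $a_i, a_j, a_k \in \mathbb{Z}/q\mathbb{Z}$ via
\begin{align*}
\sigma(\alpha_i^{1/q}) = \zeta_q^{a_i}\alpha_j^{1/q}, \quad \sigma(\alpha_j^{1/q}) = \zeta_q^{a_j}\alpha_i^{1/q}, \quad \sigma(\alpha_k^{1/q}) = \zeta_q^{a_k}\alpha_k^{1/q},
\end{align*}
together with $\sigma(\zeta_q) = \zeta_q^{-1}$.

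Next I would use the involution condition from the third bullet. A direct computation gives
\begin{align*}
\sigma^2(\alpha_i^{1/q}) = \sigma(\zeta_q^{a_i}\alpha_j^{1/q}) = \zeta_q^{-a_i}\zeta_q^{a_j}\alpha_i^{1/q} = \zeta_q^{a_j - a_i}\alpha_i^{1/q},
\end{align*}
so the requirement $\sigma^2(\alpha_i^{1/q}) = \alpha_i^{1/q}$ forces $a_i \equiv a_j \pmod{q}$ (and the same congruence is equivalent to $\sigma^2(\alpha_j^{1/q}) = \alpha_j^{1/q}$). Thus, for each of the $3$ transpositions, the triple $(a_i, a_j, a_k)$ ranges over at most $q^2$ values (one parameter for $a_i = a_j$ and one for $a_k$). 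Hence $|C| \le 3q^2 = O(q^2)$, which proves the first bound.

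For the sharper bound when $|N(\alpha)| = 1$, I would exploit that $\alpha_1\alpha_2\alpha_3 = \pm 1$. For $q$ odd (the only relevant range), the element $\eta := \alpha_1^{1/q}\alpha_2^{1/q}\alpha_3^{1/q}$ is a $q$th root of $\pm 1$, and in particular lies in $\mathbb{Q}(\zeta_{2q})$, on which $\sigma$ acts by inverting roots of unity. Comparing
\begin{align*}
\sigma(\eta) = \zeta_q^{a_i + a_j + a_k}\eta \quad \text{and} \quad \sigma(\eta) = \eta^{-1}
\end{align*}
yields the linear relation $a_i + a_j + a_k \equiv -2c \pmod{q}$, where $\eta^2 = \zeta_q^{2c}$. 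Combined with $a_i \equiv a_j$, this leaves one free parameter out of the two remaining, so each transposition contributes at most $q$ elements, giving $|C| \le 3q = O(q)$. The only point that requires minor care is the handling of the sign and the ramified prime $q = 2$, but these affect the count by at most a bounded factor and do not disturb the $O(q)$ bound.
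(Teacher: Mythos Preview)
Your proof is correct and follows essentially the same approach as the paper's: fix one of the three transpositions, parametrize $\sigma$ by the exponents on the $q$th roots, use the involution condition to force $a_i = a_j$, and in the norm~$\pm 1$ case use the relation $\alpha_i^{1/q}\alpha_j^{1/q}\alpha_k^{1/q} \in \mathbb{Q}(\zeta_{2q})$ to pin down $a_k$ as well. The paper phrases the last step as ``the image of $\alpha_k^{1/q} = N(\alpha)^{1/q}/(\alpha_i^{1/q}\alpha_j^{1/q})$ has $O(1)$ possible values'', which is exactly your linear relation read the other way.
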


\begin{proof}
One can choose $i$ and $j$ in $3$ ways. There are at most $q$ ways to choose the image of $\alpha_i^{1/q}$, each choice being of the form $\zeta_q^t\alpha_j^{1/q}$. By $\sigma(\sigma(\alpha_i^{1/q})) = \alpha_i^{1/q}$, the image of $\alpha_i^{1/q}$ uniquely determines the image of $\alpha_j^{1/q}$. Furthermore, there are at most $q$ options for the image of $\alpha_k^{1/q}$, $k \in \{1, 2, 3\} \setminus \{i, j\}$.

In the case $|N(\alpha)| = 1$ we note that given the image of $\alpha_i^{1/q}$ and $\alpha_j^{1/q}$ and that $\zeta_q \to \zeta_q^{-1}$, the image of $\alpha_k^{1/q} = N(\alpha)^{1/q}/(\alpha_i^{1/q}\alpha_j^{1/q})$ has $O(1)$ possible values.
\end{proof}

The GRH-conditional Chebotarev density theorem now gives
$$C_q^2(x) = \frac{\Li(x)|C|}{[K_q : \mathbb{Q}]} + O(\sqrt{x}|C|\log x) \ll \frac{x}{q^2} + O(\sqrt{x}q^2\log x).$$
It follows that the contribution of $H < x < x^{1/6}/(\log x)^{100}$ is $o(\pi(x))$.

For (ii, b) we improve upon the error term via the following lemma.

\begin{lemma}
\label{lem:q=5mod6}
Assume $q$ is a large enough prime (in terms of $\alpha$) satisfying $q \equiv 2 \pmod{3}$. Then the Artin L-functions associated to $K_q/\mathbb{Q}$ are entire.
\end{lemma}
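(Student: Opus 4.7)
The plan is to show that $G := \Gal(K_q/\mathbb{Q})$ is an M-group, meaning every irreducible complex representation of $G$ is induced from a linear character of some subgroup. Artin's conjecture for $K_q/\mathbb{Q}$ then follows immediately, since Artin L-functions behave multiplicatively under induction and $1$-dimensional Artin L-functions are entire by class field theory.

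First, I would set up the group structure. Let $H_1 := \Gal(K_q/K(\zeta_q))$. By Kummer theory this is an elementary abelian $q$-group, naturally embedded into $(\mathbb{Z}/q\mathbb{Z})^3$ with coordinates corresponding to $\alpha_1^{1/q}, \alpha_2^{1/q}, \alpha_3^{1/q}$. For $q$ large enough (relative to $\alpha$), $K \cap \mathbb{Q}(\zeta_q) = \mathbb{Q}$, so $G/H_1 \cong S_3 \times (\mathbb{Z}/q\mathbb{Z})^*$, with the $S_3$-factor acting on $H_1$ by coordinate permutation and the cyclic factor acting by scalar multiplication via the cyclotomic character. Since $\gcd(|H_1|, |G/H_1|) = 1$ for $q > 5$, Schur--Zassenhaus yields $G \cong H_1 \rtimes (G/H_1)$, and similarly for the inertia subgroups appearing below.

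By Clifford's theorem, every irreducible representation $\rho$ of $G$ is of the form $\mathrm{Ind}_T^G(\tilde\rho)$, where $T$ is the inertia subgroup in $G$ of some character $\chi$ of $H_1$ and $\tilde\rho$ is an irreducible representation of $T$ whose restriction to $H_1$ is $\chi$-isotypic. Writing $\chi = (c_1,c_2,c_3)$, a pair $(\sigma, a) \in S_3 \times (\mathbb{Z}/q\mathbb{Z})^*$ stabilizes $\chi$ iff $\sigma \cdot \chi = a^{-1}\chi$ coordinatewise. The key observation: if $\sigma$ is a $3$-cycle, multiplying the three resulting coordinate equations yields $a^3 = 1$. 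Since $q \equiv 2 \pmod 3$ gives $\gcd(3, q-1) = 1$, we must have $a=1$, and then $\sigma\chi = \chi$ forces $c_1 = c_2 = c_3$, so the entire $S_3$ stabilizes $\chi$. Consequently the quotient $T_0 := T/H_1$ is always isomorphic to either the trivial group, a $\mathbb{Z}/2\mathbb{Z}$ generated by a transposition paired with some scalar, or the full $S_3$.

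In each of these cases $\tilde\rho$ is induced from a linear character: when $T_0 \in \{\{1\}, \mathbb{Z}/2\mathbb{Z}\}$ the representation $\tilde\rho$ is $1$-dimensional, so there is nothing to do; when $T_0 \cong S_3$ and $\tilde\rho$ is $2$-dimensional, $\tilde\rho$ is induced from a linear character of $H_1 \rtimes A_3 \subseteq T$ via the identity $\psi \cong \mathrm{Ind}_{A_3}^{S_3}(\lambda)$ for the standard $2$-dim irreducible $\psi$ of $S_3$ and some nontrivial $\lambda$ of $A_3$. The residual case $\chi = 1$ is handled by observing that $\rho$ then factors through the supersolvable (hence monomial) group $G/H_1 \cong S_3 \times (\mathbb{Z}/q\mathbb{Z})^*$. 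Thus every irreducible $\rho$ is monomial. The main obstacle, and the only place where $q \equiv 2 \pmod 3$ is essential, is the stabilizer analysis: without this hypothesis, $T_0$ could contain a $3$-cycle paired with a nontrivial cube root of unity scalar, producing inertia subgroups whose irreducibles (involving nontrivial extensions of $A_3$) need not be monomial and would break the final induction step.
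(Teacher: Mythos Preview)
Your proof is correct and takes a genuinely different route from the paper. The paper's proof is a three-line appeal to \cite[Lemma 3.10]{wong}: it identifies the normal nilpotent $q$-subgroup $\Gal(K_q/N)$ with $N = \mathbb{Q}(\zeta_q,\alpha_1,\alpha_2,\alpha_3)$, observes that the quotient $\Gal(N/\mathbb{Q}) \cong S_3 \times (\mathbb{Z}/q\mathbb{Z})^*$ has only irreducibles of dimension at most $2$, and checks that $36 \nmid |G|$ because $q \equiv 2 \pmod 3$ forces $3 \nmid q-1$; Wong's criterion then yields the Artin conjecture directly. You instead prove the stronger statement that $G$ is monomial, by a hands-on Clifford-theoretic analysis of the inertia groups of characters of the Kummer subgroup $H_1$. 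Both arguments isolate the hypothesis $q \equiv 2 \pmod 3$ at the same point---ruling out nontrivial cube roots of unity in $(\mathbb{Z}/q\mathbb{Z})^*$---but the paper packages this into the divisibility condition $36 \nmid |G|$ required by Wong's lemma, whereas you see it concretely as preventing a $3$-cycle paired with a nontrivial scalar from stabilizing a non-constant $\chi$.

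What each buys: the paper's proof is far shorter and requires no case analysis, at the cost of importing a moderately deep black box. Your argument is self-contained and makes the monomiality explicit, which is pedagogically valuable and slightly stronger than what is needed. One small remark: once you have the global splitting $G \cong H_1 \rtimes Q$, you do not need to invoke Schur--Zassenhaus again for the inertia subgroups---you can simply take $T_0 = T \cap Q$ as the complement. Also, your extension of $\chi$ to $\hat\chi$ on $T$ works precisely because of the semidirect product splitting (setting $\hat\chi(ht) = \chi(h)$ is a homomorphism exactly because $T_0$ fixes $\chi$); you might make this step explicit, since it is the linchpin of the ``coprime Clifford theory'' you are using.
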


\begin{proof}
This follows from \cite[Lemma 3.10]{wong}. Let $N = \mathbb{Q}(\zeta_q, \alpha_1, \alpha_2, \alpha_3)$. For $q$ large enough we have
$$\Gal(N/\mathbb{Q}) \cong \Gal(\mathbb{Q}(\zeta_q)/\mathbb{Q}) \times \Gal(\mathbb{Q}(\alpha_1, \alpha_2, \alpha_3)/\mathbb{Q}).$$
The irreducible representations of cyclotomic extensions have dimension $1$ and those of the $S_3$ Galois group have dimension at most $2$. Note then that $\Gal(K_q/\mathbb{Q})/\Gal(N/\mathbb{Q})$ is of size dividing $q^3$ and hence nilpotent. Finally, since
$$|\Gal(K_q/\mathbb{Q})| = [K_q : \mathbb{Q}] \mid 6(q-1)q^3,$$
we have $36 \nmid |\Gal(K_q/\mathbb{Q})|$ if $q \equiv 2 \pmod{3}$.
\end{proof}

By \cite[Corollary 3.7]{murty-saradha}, GRH together with the above lemma gives the improved error term
$$C_q^2(x) = \frac{\Li(x)|C|}{[K_q : \mathbb{Q}]} + O(\sqrt{x}\sqrt{|C|}\log x) = \frac{x}{q^2} + O(\sqrt{x}q \log x).$$
It follows that
$$\sum_{\substack{x^{1/6}/(\log x)^{100} \le q \le x^{1/4}/(\log x)^{100} \\ q \equiv 2 \pmod{3}}} C_q^2(x) = o(\pi(x)).$$

For steps (iii, a) and (iii, b), note that $p \in S$ if and only if the minimal polynomial of $\alpha$ has exactly one root modulo $p$, which is equivalent with the discriminant $D$ of $P$ not being a square modulo $p$. Hence, by the law of quadratic reciprocity $p \in S$ if and only if $p$ lies in the union of certain congruence classes modulo $4|D|$. Now, if $\pi_S(x; q, -1)$ counts the number of primes $p \in S \cap [1, x]$ with $p \equiv -1 \pmod{q}$, then the Bombieri-Vinogradov theorem one obtains
\begin{align*}
\sum_{\substack{x^{1/4}/(\log x)^{100} \le q \le \sqrt{x}/(\log x)^{100} \\ q \equiv 2 \pmod{3}}} C_q^2(x) \le \\
\sum_{\substack{x^{1/4}/(\log x)^{100} \le q \le \sqrt{x}/(\log x)^{100} \\ q \equiv 2 \pmod{3}}} \pi_S(x; q, -1) = \\
\pi(x)d(S)\sum_{\substack{x^{1/4}/(\log x)^{100} \le q \le \sqrt{x}/(\log x)^{100} \\ q \equiv 2 \pmod{3}}} \frac{1}{q} + O(x/(\log x)^2) = \\
\pi(x)d(S)\frac{\log 2}{2} + o(\pi(x)).
\end{align*}
Similarly,
\begin{align*}
\sum_{\substack{x^{1/6}/(\log x)^{100} \le q \le \sqrt{x}/(\log x)^{100} \\ q \equiv 1 \pmod{3}}} C_q^2(x) \le \\
\pi(x)d(S)\frac{\log 3}{2} + o(\pi(x)).
\end{align*}

It follows that the number of $p \le x, p \in S$ satisfying some of the conditions $C_q^2$ is at most $\pi(x)d(S)\log \sqrt{6} + o(\pi(x))$, as desired.

\begin{remark}
If $|N(\alpha)| = 1$, then by Lemma \ref{lem:sizeC} we have $|C| \ll q$, and hence the Chebotarev density theorem may be applied in (ii, a) up to $x^{1/4}/(\log x)^{100}$ and in (ii, b) up to $x^{1/3}/(\log x)^{100}$. In steps (iii, a) and (iii, b) the Bombieri-Vinogradov theorem then gives density losses $(\log 2)/2$ and $(\log 3/2)/2$, resulting in a total loss of density equal to $\log \sqrt{3}$.
\end{remark}

\begin{remark}
If $|N(\alpha)| = 1$, one obtains density loss $\log 2 < 1$ in a simpler way by omitting step (ii, b) and using Bombieri-Vinogradov for the whole region $[x^{1/4 + o(1)}, x^{1/2 + o(1)}]$. However, in the case $|N(\alpha)| \neq 1$ such an approach would lose $\log 3 > 1$ density, thus failing to give a non-trivial result.
\end{remark}

\begin{remark}
In steps (iii, a) and (iii, b) the more careless estimate $C_q^2(x) \le |\{p \le x : p \equiv -1 \pmod{q}\}|$ is not enough, as we lose the factor $d(S) = 1/2$. In order to apply Bombieri-Vinogradov for the primes $p \in S, p \equiv -1 \pmod{q}$ we used the fact that $S$ is characterized by congruence conditions. However, the analogue of the Bombieri-Vinogradov theorem restricted to primes satisfying a given Artin symbol condition proven in \cite{murty-petersen} would have been enough in this situation as well.
\end{remark}

\begin{remark}
It seems plausible that one may adapt the argument to the case $\deg(\alpha) = 4, \deg(\varphi(\alpha)) = 2$ under some additional assumptions on $\alpha$. Step (ii) requires attention. Initially the conjugacy class is of size $|C| \ll q^3$, so the GRH-conditional error term only handles $q \le x^{1/8 - o(1)}$. However, some cases of Artin's conjecture are known. If one establishes the Artin conjecture for all $K_q$ (and not just for, say, $q \equiv 2 \pmod{3}$), then one may treat $q \le x^{1/5 - o(1)}$, and the Bombieri-Vinogradov argument handles $x^{1/5 - o(1)} \le q \le x^{1/2 - o(1)}$ with a loss of $\log 5/2 < 1$ (assuming one may handle the issue mentioned in the previous remark). Furthermore, if one assumes $|N(\alpha)| = 1$, one has $|C| \ll q^2$, further shrinking the errors. However, we have not pursued the limits of this approach, as it seems that one cannot handle the general case $\deg(\alpha) = 4, \deg(\varphi(\alpha)) = 2$ with these ideas alone.
\end{remark}

\section{The case $\deg(\varphi(\alpha)) = 2$ -- Proof of Theorems \ref{thm:N2v1} and \ref{thm:N2v2}}

Let $\alpha_1, \ldots , \alpha_n$ and $S$ be as in the theorem. Let $K$ be the Galois closure of $\mathbb{Q}(\alpha_1, \ldots , \alpha_n)$. Define $h, H, q$ and $\ell(p)$ as before, and define $C_{q, i}^j, j \in \{1, 2\}, 1 \le i \le n$ as in the proof of Theorem \ref{thm:deg32} for $\alpha_i$.

Similarly as in the proof of Theorem \ref{thm:deg32}, the number of primes $p \le x$ satisfying at least one of the conditions $C_{q, i}^1$ is $o(\pi(x))$ as $H \to \infty$, so we may again focus on the conditions $C_{q, i}^2$.

The proof consists of the following steps.
\begin{itemize}
\item[(i)] Consider the primes $q \le H$. Method: Brun-Titchmarsh.
\item[(ii)] Consider the primes $q \in [H, x^{1/2k}/(\log x)^{100}]$. Method: GRH-conditional Chebotarev density theorem.
\item[(iii)] Consider the primes $q \in [x^{1/2k}/(\log x)^{100}, x^{1/2k}(\log x)^{100n}]$. Method: The Brun-Titchmarsh inequality.
\item[(iv)] Consider the primes $q \in [x^{1/2k}(\log x)^{100n}, x]$. Method: The global argument.
\end{itemize}
In the light of the proofs of the previous theorems, it is clear how to perform steps (i)--(iii).

The method for handling step (iv) is a bit different in the situations of Theorems \ref{thm:N2v1} and \ref{thm:N2v2}. We first state the following consequence of the global argument.

\begin{lemma}
\label{lem:globalHigh}
The number of $p \le x$ such that for some $q > x^{1/2k}(\log x)^{100}$, $C_{q, i}^2$ is satisfied for at least $2k-1$ values of $i$ is $o(\pi(x))$.
\end{lemma}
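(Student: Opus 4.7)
The plan is to apply a higher-rank global argument inside the norm-one subgroup of $\mathbb{F}_{p^2}^*$ (of order $p+1$) rather than in $\mathbb{F}_{p^2}^*$ itself; this reduction is where the specific threshold $q > x^{1/2k}(\log x)^{100}$ is matched. For each prime $p$ in the bad set I would fix a witness prime $q > x^{1/2k}(\log x)^{100}$ and a subset $I \subset \{1,\ldots,n\}$ with $|I|=2k-1$ such that $\varphi(\alpha_i)$ is a $q$-th power in $\mathbb{F}_{p^2}^*$ for every $i \in I$; necessarily $q \mid p+1$ since we are in the regime of $C_{q,i}^2$. For each $i \in I$ I would pick $\sigma_i \in \Gal(K/\mathbb{Q})$ with $\varphi(\sigma_i(\alpha_i)) = \varphi(\alpha_i)^p$ (e.g. the Frobenius at the prime of $K$ underlying $\varphi$ works, and necessarily $\sigma_i(\alpha_i) \ne \alpha_i$ since $\deg \varphi(\alpha_i) = 2$). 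Setting $\gamma_i := \alpha_i/\sigma_i(\alpha_i)$, the reduction $\varphi(\gamma_i) = \varphi(\alpha_i)^{1-p}$ lies in the norm-one subgroup of $\mathbb{F}_{p^2}^*$ and is still a $q$-th power, hence sits inside a cyclic subgroup of order $(p+1)/q \ll x/q$.

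The next step is an application of Minkowski's theorem. The group $G' := \langle \varphi(\gamma_i) : i \in I\rangle$ has order at most $(p+1)/q$, so the kernel of $\mathbb{Z}^{2k-1} \to G'$ is a lattice of covolume at most $(p+1)/q$ and therefore contains a nonzero integer vector $(f_i)$ with $\|f\|_{\infty} \le L := \lceil ((p+1)/q)^{1/(2k-1)} \rceil \ll (x/q)^{1/(2k-1)}$. Writing $f_i = f_i^+ - f_i^-$ and clearing denominators, the relation $\prod_i \varphi(\gamma_i)^{f_i} = 1$ translates into $\mathfrak{p} \mid R$ for
$$R := \prod_{i \in I} \alpha_i^{f_i^+} \sigma_i(\alpha_i)^{f_i^-} - \prod_{i \in I} \alpha_i^{f_i^-} \sigma_i(\alpha_i)^{f_i^+} \in O_K,$$
where $\mathfrak{p}$ is the prime of $K$ underlying $\varphi$. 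The multiplicative independence of the $\alpha_j$ together with their conjugates guarantees that each $\sigma_i(\alpha_i)$ is distinct from $\alpha_i$ and from every $\alpha_j$, so the two monomials on the right have disjoint supports in the multiplicatively independent set, and therefore $R \ne 0$.

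The counting step is then routine. Since $\deg \varphi(\alpha_i) = 2$, the residue degree of $\mathfrak{p}$ over $p$ is at least $2$, so $p^2 \mid |N_{K/\mathbb{Q}}(R)| \ll C^{(2k-1)L}$ for a constant $C$ depending only on $K$ and the $\alpha_i$. Hence for each quadruple $(I, (\sigma_i)_{i \in I}, (f_i)_{i \in I})$ the number of admissible primes $p$ is bounded by the number of prime divisors of $|N_{K/\mathbb{Q}}(R)|$, which is $O(L)$, while the number of such quadruples is $O(L^{2k-1})$ (the choices of $I$ and of $\sigma_i$ contribute only a constant). Thus the number of bad primes with witness $q$ in a dyadic range $[Q,2Q]$ is $O(L^{2k}) = O((x/Q)^{2k/(2k-1)})$; summing dyadically over $Q \ge x^{1/2k}(\log x)^{100}$ yields a total of $O(x/(\log x)^{200k/(2k-1)}) = o(\pi(x))$.

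The main obstacle is the conceptual one in the first step. Applying Minkowski naively to $\mathbb{F}_{p^2}^*$ (of order $\sim p^2$) would require $q$ to be roughly as large as $x^{1+1/(2k)}$, which is useless here; the substitution $\alpha_i \mapsto \gamma_i = \alpha_i/\sigma_i(\alpha_i)$ replaces $p^2-1$ by $p+1$ and gains precisely the factor of $p$ needed so that the given threshold $q > x^{1/2k}(\log x)^{100}$ suffices. Verifying $R \ne 0$ via the multiplicative independence hypothesis is then the only non-routine algebraic ingredient.
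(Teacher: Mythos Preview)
Your argument is essentially the paper's: both pass to the norm-one subgroup of $\mathbb{F}_{p^2}^*$ via $\gamma_i=\alpha_i/\sigma(\alpha_i)$ with $\sigma$ the Frobenius at $\mathfrak{p}$, produce a short multiplicative relation among the $\varphi(\gamma_i)$ (you via Minkowski and a dyadic sum in $q$, the paper via the single divisibility $T\mid \prod_{\sigma\neq\mathrm{id}}\prod_{m_j\le M} N\bigl(\prod_j\alpha_{i_j}^{m_j}-\prod_j\sigma(\alpha_{i_j})^{m_j}\bigr)$ with $M\approx x^{1/2k}$), and then bound the norm. One small caveat on your nonvanishing step: the assertion that $\sigma(\alpha_i)\ne\alpha_j$ for every $j\in I$ is not automatic from multiplicative independence alone if some $\alpha_j$ happens to be a Galois conjugate of some $\alpha_i$ (then $\gamma_i\gamma_j=1$ and Minkowski could return a global-kernel vector, giving $R=0$); the paper's one-line nonvanishing claim glosses over the analogous point, and both arguments are fine under the natural reading of the hypothesis that excludes this degeneracy.
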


\begin{proof}
Fix some $1 \le i_1, \ldots , i_{2k-1} \le n$ and let $T$ denote the product of $p$ for which there exists $q > x^{1/2k}(\log x)^{100}$ such that $C_{q, i}^2$ is satisfied for $i = i_1, \ldots , i_{2k-1}$. Then $T$ divides
$$\prod_{\substack{\sigma \in \Gal(K/\mathbb{Q}) \\ \sigma \neq \text{id}}} \prod_{m_1, \ldots , m_{2k-1} \le x^{1/2k}/(\log x)^{100}} N(\alpha_{i_1}^{m_1} \cdots \alpha_{i_{2k-1}}^{m_{2k-1}} - \sigma(\alpha_{i_1})^{m_1} \cdots \sigma(\alpha_{i_{2k-1}})^{m_{2k-1}}).$$
The right hand side is non-zero by the assumption of multiplicative independence and its logarithm is $O(x/(\log x)^{100})$. Hence $\log T = o(\pi(x))$, and the claim follows by summing over all subsets $\{i_1, \ldots , i_{2k-1}\}$ of $\{1, 2, \ldots , n\}$.
\end{proof}

Theorem \ref{thm:N2v1} follows: By the lemma, almost all $p \in S$ have the property that for any $q \mid p+1, q \ge x^{1/2k}(\log x)^{100}$ the condition $C_{q, i}^2$ is satisfied for at most $2k-2$ values of $i$. Since there are at most $2k-1$ such large prime divisors $q$ of $p+1$ and $n > (2k-2)(2k-1)$, for at least one $i$ the condition $C_{q, i}^2$ is not satisfied for any such large $q$. This concludes the proof of Theorem \ref{thm:N2v1}.

Theorem \ref{thm:N2v2} follows as well: Note that since $n = 2k-1$, by the lemma the number of $p \le x, p \in S$ such that for some $q \mid p+1, q > x^{1/2k}(\log x)^{100}$ each of $\varphi(\alpha_1), \ldots , \varphi(\alpha_n)$ is a $q$th power in $\mathbb{F}_{p^2}$ is $o(\pi(x))$. Discard those $p$. Now, proceeding as in the proof of \cite[Theorem 1.2]{agrawal-pollack}, we see that for any prime $q > x^{1/2k}(\log x)^{100}$, among the $N^n - 1$ numbers
$$\varphi(\alpha_1^{e_1} \cdots \alpha_n^{e_n}), 0 \le e_i < N, e_1 + \ldots + e_n \neq 0,$$ 
at most $N^{n-1} - 1$ are $q$th powers in $\mathbb{F}_{p^2}$. Hence, as long as
$$N^n - 1 > (2k-1)(N^{n-1} - 1)$$
there is at least one number of the form
$$\varphi(\alpha_1^{e_1} \cdots \alpha_n^{e_n}), 0 \le e_i < N, e_1 + \ldots + e_n \neq 0$$
which is not a $q$th power for any $q \mid p+1, q > x^{1/2k}(\log x)^{100}$, as there are at most $2k-1$ such large prime divisors $q$ of $p+1$. The above inequality is satisfied for $N = 2k-1$.

\section{The general case -- Proof of Theorem \ref{thm:general}}

The proof of Theorem \ref{thm:general} is similar to the proofs of the other theorems. The region $q \le x^{1/2k}/(\log x)^{100}$ is handled by the GRH-conditional Chebotarev density therorem, the region $x^{1/2k}/(\log x)^{100} \le q \le x^{1/2k}(\log x)^{100}$ is handled by Brun-Titchmarsh, and if $p > x/(\log x)$, say, then by assumption on the smoothness of $p^d - 1$ the conditions imposed by primes $q > x^{1/2k}(\log x)^{100}$ are trivially satisfied. We omit the details.

\section{The case $\deg(\varphi(\alpha)) \in \{3, 4, 6\}$ -- Proof of Theorem \ref{thm:Nk}}

The additional ingredient to the preceeding discussion is the following result, a proof sketch kindly supplied to us by Jori Merikoski.

\begin{proposition}
\label{prop:mediumPrimeFactor}
Let $P \in \mathbb{Z}[x]$ be an irreducible quadratic with positive leading coefficient and negative discriminant. Let $c > 0$ be fixed. Let $d_c(\epsilon)$ denote the upper density of primes $p$ such that $P(p)$ is divisible by some number $m \in [p^{1-\epsilon}, p^{1+\epsilon}]$ such that all prime factors of $m$ are at least $p^c$. Then $d_c(\epsilon) \ll_{c} \epsilon$.
\end{proposition}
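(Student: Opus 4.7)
The plan is to bound
$$N_x := \#\bigl\{p \le x : P(p) \text{ has an } x^c\text{-rough divisor in } [p^{1-\epsilon}, p^{1+\epsilon}]\bigr\} \ll_c \epsilon \pi(x),$$
from which the proposition follows. Restricting to a dyadic window $p \in (X/2, X]$ and applying the union bound gives
$$N_X \le \sum_m N(m), \qquad N(m) := \#\{p \in (X/2, X] : m \mid P(p)\},$$
where $m$ ranges over $X^c$-rough integers in $[X^{1-\epsilon}, X^{1+\epsilon}]$. The key structural observation is that any such $m$ satisfies $\omega(m) \le (1+\epsilon)/c$, so $\rho(m) := \#\{r \pmod m : P(r) \equiv 0\} \le 2^{\omega(m)} = O_c(1)$ and $\phi(m) \asymp m$ (with implicit constants depending on $c$).

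For the range $m \in [X^{1-\epsilon}, X/(\log X)^2]$, Brun--Titchmarsh applies cleanly, giving $N(m) \ll \rho(m) X/(\phi(m) \log(X/m)) \ll_c X/(m \log X)$. Since the $X^c$-rough integers have density $\asymp_c 1/\log X$ throughout $[1, X]$, partial summation yields $\sum_{m \text{ rough}, \, m \in [X^{1-\epsilon}, X]} 1/m \asymp_c \epsilon$, and this range contributes $O_c(\epsilon X/\log X) = O_c(\epsilon \pi(X))$.

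The remaining range $m \in [X/(\log X)^2, X^{1+\epsilon}]$ is the main obstacle, since Brun--Titchmarsh becomes too weak (and, for $m > X$, inapplicable). Here I would follow Merikoski's hint: completing the square rewrites $m \mid P(p)$ as $(2ap + b)^2 \equiv D \pmod{4am}$, where $D = b^2 - 4ac < 0$. The roots of $y^2 \equiv D \pmod{4am}$, normalized as $r/(4am)$, are equidistributed modulo $1$ by the theorem of Duke--Friedlander--Iwaniec; more quantitatively, one has a Weyl-type bound of the form
$$\sum_{\substack{m \sim M \\ m \text{ is } X^c\text{-rough}}} \, \sum_{y^2 \equiv D \pmod{4am}} e\!\left(\frac{\alpha y}{4am}\right) \ll_c M^{1-\delta}, \qquad \alpha \in \mathbb{Z} \setminus \{0\},$$
for some $\delta = \delta(c) > 0$. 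Combining this with a Vaughan-type identity for $\Lambda(p)$ to sieve primes among the roots should yield the ``expected'' count $\sum_m \rho(m) X/(m \log X) \ll_c \epsilon X/\log X = O_c(\epsilon \pi(X))$ for this range as well.

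The naive pointwise bound $N(m) \le \rho(m) = O_c(1)$ for $m > X$ is too crude: it would give $\sum_m N(m) \ll_c \epsilon X$, losing a factor of $\log X$. Hence the sieve step is essential --- it exploits the fact that a typical root of the quadratic congruence is not prime. Establishing the Weyl-sum bound with the $X^c$-rough restriction and coupling it with a prime sieve of adequate level of distribution is precisely where Merikoski's sketch does the main work; everything else is routine sieve-theoretic bookkeeping.
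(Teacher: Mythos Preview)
Your Brun--Titchmarsh step contains a genuine error that breaks the bound. You claim $N(m) \ll_c X/(m\log X)$ for $m \in [X^{1-\epsilon}, X/(\log X)^2]$, but Brun--Titchmarsh only gives $N(m) \ll_c X/(m\log(X/m))$, and on this range $\log(X/m) \le \epsilon\log X$, not $\gg \log X$. Carrying the correct denominator through, the rough-integer density yields
\[
\sum_{\substack{m\ \text{rough}\\ X^{1-\epsilon}\le m\le X/(\log X)^2}} \frac{1}{m\log(X/m)} \ \asymp_c\ \frac{1}{\log X}\int_{2\log\log X}^{\epsilon\log X}\frac{ds}{s}\ \asymp_c\ \frac{\log\log X}{\log X},
\]
so this range contributes $\asymp_c \pi(X)\log\log X$, not $O_c(\epsilon\,\pi(X))$. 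Even the single dyadic block $m\sim X^{1-\epsilon}$ already loses the $\epsilon$: there $\log(X/m)\asymp\epsilon\log X$ while $\sum_m 1/m\asymp_c\epsilon$, giving only $O_c(\pi(X))$. The split into a Brun--Titchmarsh range and a Weyl-sum range is therefore not viable; the equidistribution input is needed for the \emph{entire} window $[X^{1-\epsilon}, X^{1+\epsilon}]$.

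The paper does precisely this, with one simplification relative to your plan: instead of a Vaughan identity to detect primes $p$ (which would require Type~I and Type~II estimates), it relaxes ``$p$ prime'' to ``$p$ is $x^c$-rough'' via an upper-bound sieve on \emph{both} variables $m_1$ and $m_2$. One then counts solutions to $P(m_1)=m_2m_3$ with $d_1\mid m_1$, $d_2\mid m_2$ by Poisson summation in $m_1$ and the Duke--Friedlander--Iwaniec Weyl-sum bound over $m_2$; the main term is multiplicative in $d_1,d_2$, and the sieve recovers $\ll_c x/(\log x)^2$ per dyadic block of $m_2$, hence $\ll_c \epsilon\,x/\log x$ in total. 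Your Vaughan route could presumably be made to work but is heavier than needed for an upper bound.
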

Already the statement that for some $\epsilon > 0$ a positive lower density of $n$ are such that $n^2 + 1$ has no prime divisor in $[n^{1-\epsilon}, n^{1+\epsilon}]$ is non-trivial. The proposition gives $d(\epsilon) \to 0$ with linear decay when $n$ ranges over the primes (which is heuristically optimal up to the implied constant). The proof of the above proposition also shows the corresponding result when $n$ is not restricted to the primes. We have for simplicty restricted to polynomials with negative discriminants, but the proof should go through in general (see the remarks in the proof).

We first show how to deduce Theorem \ref{thm:Nk} from here before proving the proposition. Fix $k \in \{3, 4, 6\}$ and $\epsilon > 0$, and let $\alpha_1, \ldots , \alpha_n$ and $S$ be as in the theorem and let $\varphi$ be a reduction map with $\deg(\varphi(\alpha_i)) = k$ for all $i$. Define $h, H, q$ and $\ell(p)$ as before and let $K$ be the Galois closure of $\mathbb{Q}(\alpha_1, \ldots , \alpha_n)$. For each divisor $d$ of $k$ and $1 \le i \le n$, let $C_{q, i}^d$ be the condition
\begin{center}
a prime $p \in S$ satisfies $\Phi_d(p) \equiv 0 \pmod{\ell(q)}$ and $\varphi(\alpha_i)$ is a perfect $\ell(q)$th power in $\mathbb{F}_{p^k}$,
\end{center}
where $\Phi_d$ is the $d$th cyclotomic polynomial. (The first six cyclotomic polynomials are $\Phi_1(x) = x-1, \Phi_2(x) = x+1, \Phi_3(x) = x^2 + x + 1, \Phi_4(x) = x^2 + 1, \Phi_5(x) = x^4 + x^3 + x^2 + x + 1$ and $\Phi_6(x) = x^2 - x + 1$.)

Let
$$c = \frac{1}{2\max(\deg(\alpha_1), \ldots , \deg(\alpha_n))},$$
and let $S'$ be the set of primes $p \in S, p \le x$ such that for any $d \mid k, d > 2$ the number $\Phi_d(p)$ is not divisible by any integer
$$m \in [h^{-1}x^{1-\epsilon}, x^{1+1/(n+1)}(\log x)^{100n}]$$
whose smallest prime factor is greater than $p^c$. By Proposition \ref{prop:mediumPrimeFactor} the lower density of $S'$ is at least
$$d(S) - C\epsilon$$
for some constant $C$ depending only on $c$. (Recall that $h$ tends to infinity arbitrarily slowly and $n \approx 1/\epsilon$.)

The Chebotarev density theorem may be applied to treat the region $q < x^c/(\log x)^{100}$. Indeed, one sees that $C_{q, i}^d$ is satisfied only if the Artin symbol of $p$ with respect to
$$K_{q, i}^d := \mathbb{Q}(\zeta_{\ell(q)}, \alpha_{i, 1}^{1/\ell(q)}, \ldots , \alpha_{i, \deg(\alpha_i)}^{1/\ell(q)}),$$
where $\alpha_{i, j}$ are the conjugates of $\alpha_i$, is such that the order of $\zeta_q$ is $d$ and for some $j$ the order of both $\alpha_{i, j}$ and $\alpha_{i, j}^{1/q}$ are $k$. It is not difficult to see that the set of automorphisms of $K_{q, i}^d$ with these properties form a conjugacy class $C$, and furthermore that $|C| \ll q^{-2}[K_{q, i}^d : \mathbb{Q}] \ll q^{\deg(\alpha_i) - 1}$.

Note that the contribution of the short interval $[x^c/(\log x)^{100}, x^c]$ may furthermore be treated by Brun-Titchmarsh.

We then note that the global argument is sufficient to show that for any $d \mid k$, there are only $o(\pi(x))$ primes $p \in S'$ with the property that there exist primes $q_1, \ldots , q_m$ such that $C_{q_j, i}^d$ is satisfied for all $1 \le j \le m, 1 \le i \le n$ and $q_1q_2 \cdots q_m > x^{1_{d > 2} + 1/(n+1)}(\log x)^{100n}$. This is essentially Lemma \ref{lem:globalHigh}

This already handles the conditions $C_{q, i}^d$ for $d \in \{1, 2\}$: we may assume that $\epsilon$ is small enough, so that $n$ is large enough in terms of $c$, so that $x^c$ is larger than $x^{1/(n+1)}(\log x)^{100n}$.

Hence, if for some prime $p \in S'$ one had $\ord_p(\varphi(\alpha_i)) \le x^{f(k) + \epsilon}$ for all $i$ then, neglecting the $o(\pi(x))$ exceptional primes described above, for some $d \mid k, d > 2$ there exist primes $q_1, \ldots , q_m$ dividing $\Phi_d(p)$ with $q_j > x^c$ and
$$q_1q_2 \cdots q_m \in [h^{-1}x^{1 - \epsilon}, x^{1 + 1/(n+1)}(\log x)^{100n}],$$
contradicting the definition of $S'$. This concludes the proof.

\begin{proof}[Proof of Proposition \ref{prop:mediumPrimeFactor}]
The main idea is transforming the problem to that of Weyl sums for quadratic congruences, a method developed by Hooley (see e.g. \cite{hooleyAverage}, Theorem 1 in particular), though we will use a result of Duke, Friedlander and Iwaniec \cite{duke-friedlander-iwaniec}. A sieve is used to deduce results about (almost) prime arguments and values. Our notation and the structure of the proof follow that of \cite{DI}.

Let $P(t) = At^2 + Bt + C$. Let $S$ denote the set of integers with no prime factor less than $x^c$, except possibly having prime factors which divide $C(B^2 - 4AC)$. We consider the slightly relaxed problem of considering integer solutions to the equation
\begin{align}
\label{eq:equ}
P(m_1) = m_2m_3
\end{align}
with $m_1 \in [1, x] \cap S$ and $m_2 \in [x^{1-\epsilon}, x^{1+\epsilon}] \cap S$. We will obtain that the number of solutions to this equation is $\ll_c \epsilon \pi(x)$, which implies the result.

We will apply a sieve to control the set $S$, and hence we first consider the problem with the condition $m_1, m_2 \in S$ replaced by divisibility conditions $d_1 \mid m_1, d_2 \mid m_2$, where $d_1, d_2$ are coprime with $C(B^2 - 4AC)$ and less than $x^{c}$. We will obtain a power-saving error term (as long as $c$ is small enough, which we may assume), which is enough for applying the sieve. Note that it suffices to consider the case $(d_1, d_2) = 1$ and $\omega(d_2) \neq 0$, where $\omega(m)$ is the number of solutions to $P(x) \equiv 0 \pmod{m}$, as otherwise the number of solutions to \eqref{eq:equ} with $d_i \mid m_i$ is zero.

Let $b_1, b_2, \ldots$ be $C^{\infty}$ functions with $\sum b_j(t) = 1$ for all $t \in \mathbb{R}$ such that $b_j$ is supported on the interval $[M_{j-1}, M_{j+1}] := [2^{j-1}, 2^{j+1}]$ and such that $b_j^{(n)}(t) \ll_n M_j^{-n}$ for any $n$, where $f^{(n)}$ is the $n$th derivative of $f$.

The number of solutions to \eqref{eq:equ} with $d_i \mid m_i, m_1 \le x$ and $m_2 \in [x^{1-\epsilon}, x^{1+\epsilon}]$ may be bounded from above by
\begin{align*}
\sum_{\substack{j_1, j_2, j_3 \\ \text{Supp } b_{j_2} \cap \ [x^{1 - \epsilon}, x^{1 + \epsilon}] \neq \emptyset}} \sum_{\substack{P(m_1) = m_2m_3 \\ d_1 \mid m_1 \\ d_2 \mid m_2 \\ m_1 \le x}} b_{j_1}(m_1)b_{j_2}(m_2)b_{j_3}(m_3).
\end{align*}
Fix $j_1, j_2, j_3$ and consider the inner sum. For brevity, we abuse notation and write $b_1, b_2, b_3$ and $M_1, M_2, M_3$ instead of $b_{j_1}, b_{j_2}, b_{j_3}$ and $M_{j_1}, M_{j_2}, M_{j_3}$.

Write the sum as (noting that any solution satisfies $(d_1, m_2) = 1$)
\begin{align*}
\sum_{\substack{m_2 \\ d_2 \mid m_2 \\ (d_1, m_2) = 1}} b_2(m_2) \sum_{\substack{v \pmod{d_1m_2} \\ P(v) \equiv 0 \pmod{m_2} \\ d_1 \mid v}} \sum_{\substack{m_1 \le x \\ m_1 \equiv v \pmod{d_1m_2}}} b_1(m_1)b_3\left(\frac{P(m_1)}{m_2}\right),
\end{align*}
and use the Poisson summation formula to obtain
\begin{align*}
\sum_{\substack{m_2 \\ d_2 \mid m_2 \\ (d_1, m_2) = 1}} \frac{b_2(m_2)}{d_1m_2} \sum_{\substack{v \pmod{d_1m_2} \\ P(v) \equiv 0 \pmod{m_2} \\ d_1 \mid v}} \sum_{h \in \mathbb{Z}} e\left(\frac{-hv}{d_1m_2}\right) \int_0^x b_1(t)b_3\left(\frac{P(t)}{m_2}\right)e\left(\frac{h}{d_1m_2}t\right) dt.
\end{align*}
The term $h = 0$ will give the main term, which will turn out to be roughly of order $x/d_1d_2$. Before further evaluating the main term we bound the error terms arising from $h \neq 0$. Partial integration for $h \neq 0$ gives
\begin{align*}
\sum_{\substack{m_2 \\ d_2 \mid m_2}} b_2(m_2)\sum_{\substack{v \pmod{d_1m_2} \\ P(v) \equiv 0 \pmod{m_2} \\ d_1 \mid v}} \sum_{h \neq 0} \frac{1}{2\pi i h}e\left(\frac{-hv}{d_1m_2}\right) \times \\
\left(b_1(x)b_3\left(\frac{P(x)}{m_2}\right)e\left(\frac{hx}{d_1m_2}\right) - \int_0^x \left(b_1(t)b_3\left(\frac{P(t)}{m_2}\right)\right)'e\left(\frac{ht}{d_1m_2}\right) dt\right).
\end{align*}
We show how to bound the part of the sum involving $b_1(x)b_2(P(x)/m_2)e(hx/d_1m_2)$; the part involving the integral is bounded similarly (by first changing the order of summation and integration and then performing similar considerations).

The contribution of large $h$ is treated as follows (cf. \cite[Section 4]{DI}). One has for any $H$
\begin{align*}
\sum_{|h| > H} \frac{1}{2\pi i h} e\left(\frac{h(x - v)}{d_1m_2}\right) = O((1 + H||(x-v)/d_1m_2||)^{-1}),
\end{align*}
where $||t||$ denotes the distance to the nearest integer. Let $\delta \in (0, 1/2]$ be given, and consider the number $f(\delta)$ of pairs $m_2, v$ with $m_2 \in \text{Supp } b_2 \subset [M_2/2, 2M_2]$ and $0 \le v < d_1m_2$ for which one has $||(x-v)/d_1m_2|| < \delta$. These pairs satisfy the relaxed condition $||(x-v)/m_2|| < d_1\delta$ as well, and so
\begin{align*}
\Big|\Big|\frac{Ax^2 + Bx + C}{m_2}\Big|\Big| = \\
\Big|\Big|\frac{Ax^2 + Bx + C - (Av^2 + Bv + C)}{m_2}\Big|\Big| = \\
\Big|\Big|\frac{(x-v)(Ax + Av + B)}{m_2}\Big|\Big| < \\
(Ax + Av + B)d_1\delta = O((x+M_2)d_1^2\delta).
\end{align*}
It follows that for any such pair $m_2, v$ there exist integers $y, z$ with $|y| = O((x/M_2+1)d_1^2\delta)$ such that
$$\lfloor P(x) \rfloor + zm_2 - y = 0.$$
Given $y$, any value of $m_2$ must be such that
$$m_2 \mid \lfloor P(x) \rfloor - y,$$
and hence there are at most $x^{\epsilon}$ such values for any $\epsilon > 0$. (Note that $y = O(x^{1.5})$, say, so $\lfloor P(x) \rfloor - y \neq 0$.) It follows that
$$f(\delta) = O((x/M_2 + 1)x^{2c + \epsilon}\delta)$$
for any $\epsilon > 0$. By dyadic summation it then follows that 
\begin{align*}
\sum_{\substack{m_2 \\ d_2 \mid m_2 \\ (d_1, m_2) = 1}} b_2(m_2) \sum_{\substack{v \pmod{d_1m_2} \\ P(v) \equiv 0 \pmod{m_2} \\ d_1 \mid v}} \sum_{|h| > H} \frac{1}{2\pi i h} e\left(\frac{-hv}{d_1m_2}\right)b_1(x)b_3\left(\frac{P(x)}{m_2}\right)e\left(\frac{hx}{d_1m_2}\right) = \\
\sum_{\substack{m_2 \\ d_2 \mid m_2 \\ (d_1, m_2) = 1}} b_2(m_2)b_1(x)b_3\left(\frac{P(x)}{m_2}\right) \sum_{\substack{v \pmod{d_1m_2} \\ P(v) \equiv 0 \pmod{m_2} \\ d_1 \mid v}} \sum_{|h| > H} \frac{1}{2\pi i h} e\left(\frac{h(x-v)}{d_1m_2}\right) \ll \\
\sum_{j = 0}^{\log_2(H)} f(2^j/H) \cdot \frac{1}{2^j} \ll \\
\frac{1}{H}(x/M_2 + 1)x^{2c + o(1)},
\end{align*}
as long as $H$ is $\ll x^{O(1)}$, say. Hence, if $c$ and $\epsilon$ are small enough and $H$ is larger than $x^{10c}$ (say), then the error arising from $|h| > H$ is small enough.

The contribution of small $h$ is treated via Weyl sums. We have the task of estimating, for each fixed $h$,
\begin{align*}
\sum_{\substack{m_2 \\ d_2 \mid m_2 \\ (d_1, m_2) = 1}} b_2(m_2)e\left(\frac{hx}{d_1m_2}\right)b_3\left(\frac{P(x)}{m_2}\right) \sum_{\substack{v \pmod{m_2} \\ P(d_1v) \equiv 0 \pmod{m_2}}} e\left(\frac{-vh}{m_2}\right).
\end{align*}
Note that we may drop the condition $(d_1, m_2) = 1$, as otherwise the inner sum is empty. Define
$$f(m_2) = f_{d_1, h, x}(m_2) = b_2(m_2)e\left(\frac{hx}{d_1m_2}\right)b_3\left(\frac{P(x)}{m_2}\right)$$
so that our sum may be written as
\begin{align*}
\sum_{\substack{m_2 \\ d_2 \mid m_2}} f(m_2) \sum_{\substack{v \pmod{m_2} \\ P(d_1v) \equiv 0 \pmod{m_2}}} e\left(\frac{-vh}{m_2}\right).
\end{align*}
By Abel summation, the above is bounded in absolute value by
\begin{align*}
\int_{\text{Supp } f} |f'(t)|\Big|\sum_{\substack{m_2 \le t \\ d_2 \mid m_2}} \sum_{\substack{v \pmod{m_2} \\ P(d_1v) \equiv 0 \pmod{m_2}}} e\left(\frac{-vh}{m_2}\right)\Big| dt
\end{align*}
For the sum over $m_2$ and $t$ we apply \cite[Proposition 1]{duke-friedlander-iwaniec} to obtain a power-saving upper bound $t^{1 - \kappa}$ for some $\kappa > 0$.

We should note that in \cite[Proposition 1]{duke-friedlander-iwaniec} the dependency in the discriminant $B^2 - 4AC$ of $P$ is not specified. However, one can check that the dependency is polynomial in $D$, which is sufficient for us. See for example \cite{duke-friedlander-iwaniec2} for more precise formulations in the discriminant aspect. We remark that in many results in this area (such as \cite[Proposition 1]{duke-friedlander-iwaniec} and \cite[Theorem 1]{duke-friedlander-iwaniec2}) one further imposes conditions on the sign of the discriminant and/or assumes that the discriminant is fundamental, which is why we have restricted to negative discriminant. However, at least in \cite{toth} the methods of \cite{duke-friedlander-iwaniec} are extended to arbitrary discriminants, so such issues can be worked around.

For the derivative we calculate
\begin{align*}
f'(t) \ll \frac{1}{t} + \frac{hx}{d_1t^2} + \frac{1}{t} \ll \frac{1}{t}\left(1 + \frac{hx}{t}\right).
\end{align*}
As $f$ is supported on $[M_2/2, 2M_2]$ and $x^{1 - \epsilon} \ll M_2 \ll x^{1 + \epsilon}$, it follows that (for small enough $c$ and $\epsilon$) the integral is
\begin{align*}
\ll M_2x^{-\kappa'}
\end{align*}
for some small constant $\kappa' > 0$.

The results for large and small $h$ together yield the existence of a constant $\kappa'' > 0$ such that
\begin{align*}
\sum_{j_1, j_3} \sum_{\substack{P(m_1) = m_2m_3 \\ d_1 \mid m_1, d_2 \mid m_2 \\ m_1 \le x}} b_{j_1}(m_1)b_{j_2}(m_2)b_{j_3}(m_3) = \frac{x}{d_1}\sum_{\substack{m_2 \\ d_2 \mid m_2 \\ (d_1, m_2) = 1}} \frac{b_{j_2}(m_2)\omega(m_2)}{m_2} + O(x^{1 - \kappa''}),
\end{align*}
where $\omega(m)$ denotes the number of solutions to $P(v) \equiv 0 \pmod{m}$. The error term is small enough to be neglected.

To evaluate the main term, we proceed by first extracting the condition $(d_1, m_2) = 1$ by Möbius inversion to get (neglecting the error term)
\begin{align*}
\frac{x}{d_1} \sum_{k \mid d_1} \mu(k) \sum_{\substack{m_2 \\ d_2k \mid m_2}} \frac{b_{j_2}(m_2)\omega(m_2)}{m_2}.
\end{align*}
The sum has been calculated in \cite[Section 8]{DI2} in the special case $P(x) = x^2 + 1$, $d_1 = 1$. The general case follows similarly. Here is a brief overview highlighting the differences. By Hensel's lemma, one has $\omega(p^k) = \omega(p)$ for any $p \nmid B^2 - 4AC$, from which one sees that for any $d$ with $\omega(d) \neq 0, (d, B^2 - 4AC) = 1$ we have
\begin{align*}
\sum_{m = 1}^{\infty} \frac{\omega(dm)}{\omega(d)}m^{-s} = \frac{\zeta(s)L(s, \chi_{\text{QR}})}{\zeta(2s)} \prod_{p \mid d} \left(1 + \frac{1}{p^s}\right)^{-1} \times \\
\prod_{p \mid B^2 - 4AC} \frac{(1 + \omega(p)p^{-s} + \omega(p^2)p^{-2s} + \ldots)(1 + p^{-2s} + p^{-4s} + \ldots)}{1 + p^{-s} + p^{-2s} + \ldots}
\end{align*}
where $\chi_{QR}$ is a character modulo $|B^2 - 4AC|$ such that $1 + \chi_{QR}(p) = \omega(p)$ for any prime $p$. Such a character exists by quadratic reciprocity. Then, one expresses $b_2(m_2)/m_2$ as an integral arising from the Mellin inversion and applies the Cauchy residue theorem to move the line of integration. The main term arises from the pole of $\zeta(s)$ at $s = 1$, while the error term is power-saving (and thus negligible). Our sum is thus, up to error,
\begin{align*}
\frac{x\omega(d_2)}{d_1d_2}\prod_{p \mid d_2} \left(1 + \frac{1}{p}\right)^{-1} \sum_{k \mid d_1} \frac{\mu(k)\omega(k)}{k} \prod_{p \mid k} \left(1 + \frac{1}{p}\right)^{-1} \frac{L(1, \chi_{\text{QR}})}{\zeta(2)} \int \frac{b_{j_2}(t)}{t} dt \times \\
\prod_{p \mid B^2 - 4AC} \frac{(1 + \omega(p)p^{-1} + \ldots)(1 + p^{-2} + \ldots)}{1 + p^{-1} + \ldots}
\end{align*}
(It is not hard to prove that the last product is finite.)

Note now that the main term is multiplicative in $d_1$ and $d_2$, with value in $(d_1, d_2) = (p, 1)$ equal to $\frac{1}{p}(1 - \omega(p)/(p+1))$ and in $(d_1, d_2) = (1, p)$ to $\omega(p)/(p+1)$. It follows that after applying a sieve (as in \cite[Section 8]{DI2}) we obtain that the number of solutions to \eqref{eq:equ} with $m_1 \le x$ and $m_1, m_2$ coprime with any prime $p \le x^c, (p, C(B^2 - 4AC)) = 1$ is, after weighting by $b_{j_2}(m_2)$, bounded by a constant times $\ll x/(\log x)^2$. Summing over the $O(\epsilon \log x)$ values of $j_2$ with $\text{Supp } b_{j_2} \cap [x^{1 - \epsilon}, x^{1 + \epsilon}] \neq \emptyset$ gives the desired upper bound $\ll \epsilon x/\log x$.
\end{proof}

\section{Almost equidistribution of linear recurrences modulo primes}

We prove Theorem \ref{thm:linRec}. With the setup of the theorem, let $\epsilon$ be such that $n = \lfloor 1/\epsilon \rfloor$. We apply Theorem \ref{thm:Nk}. It follows that for a lower density $d \ge 1 - c_k\epsilon$ of primes $p \in S$ one has
$$\ord_p(\varphi(\alpha_i)) \ge p^{g(k) + \epsilon}.$$
By \cite[Corollary 4.2]{niederreiter} we then have
\begin{align*}
\Big||\{1 \le j \le T_{i, p} : a_{i, j} \equiv r \pmod{p}\}| - \frac{T_{i, p}}{p}\Big| \le p^{k/2},
\end{align*}
and so
\begin{align*}
|\{1 \le j \le T_{i, p} : a_{i, j} \equiv r \pmod{p}\}| = \frac{T_{i, p}}{p}\left(1 + O(p^{-\epsilon})\right).
\end{align*}
which is the desired result.

We then show how to deduce Corollary \ref{cor:linRec}. Fix some small $\epsilon > 0$.

Let $\mathcal{P}$ denote the set of primes $p$ such that for any $d \mid k, d > 2$ the number $\Phi_d(p)$ is not divisible by any number $m \in [p^{1-\epsilon^2}, p^{1+\epsilon^2}]$, for which the smallest prime factor of $m$ is larger than $p^{1/2k}$. The lower density of $\mathcal{P}$ with respect to the set of all primes is $1 - O(\epsilon^2)$ by Proposition \ref{prop:mediumPrimeFactor}.

For an algebraic number $\alpha$ of degree $k$, let $S(\alpha)$ denote the set of primes $p$ for which the reductions of $\alpha$ to $\mathbb{F}_{p^{\infty}}$ have degree $k$. Let $\mathcal{A}$ denote the set of algebraics $\alpha$ of degree $k$ which are multiplicatively independent with their conjugates and for which $S(\alpha)$ is infinite. Note that by the Chebotarev density theorem $S(\alpha)$ is infinite if and only if its (lower) density is at least $1/k!$. Hence for $\epsilon$ small enough (in terms of $k$) the set $S(\alpha)$ has positive lower density if and only if the set $S'(\alpha) = S(\alpha) \cap \mathcal{P}$ has positive lower density.

Let $T'(\alpha)$ denote the set of $p \in S'(\alpha)$ for which $\ord_p(\varphi(\alpha))$ is less than $p^{g(k) + \epsilon^3}$. Say that $\alpha \in \mathcal{A}$ is \emph{bad} if the lower density of $T'(\alpha)$ in $S'(\alpha)$ is greater than $\epsilon/2$.

Note that if $\alpha \in \mathcal{A}$ is such that the conclusion of the theorem is not true for $\alpha$ when $c_{\epsilon} = \epsilon^3$, then (by the proof of Theorem \ref{thm:linRec}) $\alpha$ is bad.

Assume now that the result of the corollary is not true, so that there exist arbitrarily large sets of bad $\alpha_1, \ldots , \alpha_N \in \mathcal{A}$ such that the splitting fields of $\alpha_i$ are linearly disjoint. Note that, by Theorem \ref{thm:Nk}, there exists an integer $n$ (independent of $N$ and $\alpha_i$) such that for any subset $\{\alpha_{i_1}, \ldots , \alpha_{i_m}\}$ of any size $m$, the density of $p \in S'(\alpha_{i_1}) \cap \ldots \cap S'(\alpha_{i_m})$ for which $p \in T'(\alpha_{i_j})$ for at least $n$ values of $j$ is zero.

After noting that for any $\alpha \in \mathcal{A}$ the lower density of $S'(\alpha)$ is bounded from below by some constant $c > 0$ independent of $\alpha$, it follows that for large values of $x$ we have
\begin{align*}
Nc|\mathcal{P} \cap [1, x]|\epsilon/3 \le \\
\sum_{i = 1}^N |T'(\alpha_i) \cap [1, x]| = \\
\sum_{\substack{p \in \mathcal{P} \\ p \le x}} |\{1 \le i \le n : p \in T'(\alpha_i)| \le \\
o(\pi(x)) +  (n-1)\pi(x),
\end{align*}
which results in a contradiction for $N$ large enough.

\begin{remark}
\label{rem:lowerDensity}
Consider the following hypothetical situation: The orders of $\alpha \in \mathcal{A}$ are almost always of almost maximal order, except that for each $\alpha \in \mathcal{A}$ there is a sequence of intervals $I_1(\alpha), I_2(\alpha), \ldots$ of the form $[y, y^2]$ such that the order of $\alpha$ in $\mathbb{F}_{p^k}$ is less than $p^{g(k)}$ for $p \in I_i(\alpha)$ (but larger than $p^{g(k) - \epsilon}$ for any fixed $\epsilon > 0$). One can construct the intervals $I_i(\alpha)$ such that this situation is consistent with Theorem \ref{thm:Nk} (by taking the intervals to be far away from each other and disjoint for algebraics which, together with their conjugates, are multiplicatively independent). However, in this situation the result of Corollary \ref{cor:linRec} is not true with upper density replaced by lower density, as the lower density is zero.

A similar remark applies to Heath-Brown's \cite{heath-brown} classical result on the original primitive root conjecture. While higher rank inspections are sufficient for proving that almost any integer is a primitive root modulo $p$ for infinitely many primes $p$, it could, hypothetically, be the case that for each element there exist very long intervals such that the element fails to satisfy the given property for primes in these intervals, and thus one cannot obtain lower density results this way.
\end{remark}

\bibliography{higherDegreeArtinConjecture}
\bibliographystyle{plain}

\end{document}